\def\maxwidth{ %
  \ifdim\Gin@nat@width>\linewidth
    \linewidth
  \else
    \Gin@nat@width
  \fi
}
\definecolor{fgcolor}{rgb}{0.345, 0.345, 0.345}
\definecolor{shadecolor}{rgb}{.97, .97, .97}
\definecolor{messagecolor}{rgb}{0, 0, 0}
\definecolor{warningcolor}{rgb}{1, 0, 1}
\definecolor{errorcolor}{rgb}{1, 0, 0}
\newtheorem{thm}{Theorem}
\newtheorem{lem}{Lemma}
\newtheorem{assumption}{Assumption}
\newtheorem{defi}{Definition}
\newcommand\R{{\sf I\kern-0.1em R}}
\newcommand{\mathsym}[1]{{}}
\begin{document}
\title{Lyapunov function approach for approximation algorithm design and analysis: with applications in continuous submodular maximization}
\author[]{Donglei Du\thanks{\tt ddu@unb.ca}}
\affil[]{Faculty of Management, University of New Brunswick, Fredericton, New Brunswick, Canada, E3B 5A3}

\maketitle

\begin{abstract} We propose a two-phase systematical framework for approximation algorithm design and analysis via Lyapunov function. The first phase consists of using Lyapunov function as an input and outputs a continuous-time  approximation algorithm with a provable approximation ratio. The second phase then converts this continuous-time algorithm to a discrete-time algorithm with almost the same approximation ratio along with provable time complexity. One distinctive feature of our framework is that we only need to know the parametric form of the Lyapunov function whose complete specification will not be decided until the end of the first phase by maximizing the approximation ratio of the continuous-time algorithm. Some immediate benefits of the Lyapunov function approach include: (i) unifying many existing algorithms; (ii) providing a guideline to design and analyze new algorithms; and (iii) offering new perspectives to potentially improve existing algorithms. We use various submodular maximization problems as running examples to illustrate our framework.
\end{abstract}
 
\textbf{Keyword}: Lyapunov function; Potential function, Approximation algorithm; Approximation ratio; Algorithm design and analysis

\section{Introduction}

More often than not, curious readers find themselves spending much time wondering where those nice algorithms, analyses and proofs come from in the first place. This work provides a systematic framework to attempt to shed some light on the design and analysis of approximation algorithm. We will illustrate this framework with representative examples from submodular maximization. 

We propose using Lyapunov function not only as a technique for analysis and proof as have been done in the previous literature, but also as a guideline for design of approximation algorithm. The difference between \emph{design} and \emph{analysis} is that the former explains how to systematically design an algorithm and analysis is just a consequence of the design process, while the latter assumes an algorithm is known beforehand and then analyzes its approximation ratio and time complexity afterwards. 

One major difference between this work and previous Lyapunov function based work is that almost all previous literature has been using Lyapunov function or potential function as an analysis tool and a proof technique~\citep{wilson2018lyapunov,bansal2019potential,chen2021unified} after the Lyapunov function is given. However, the most important question of where these Lyapunov functions or potential functions are pulled is left unexplained or explained in a qualitative, and highly ad-hoc manner. This work therefore is an attempt to answer to the ``a potential function appearing out of nowhere." phenomenon~\cite{buchbinder2009design}).

Lyapunov function in the continuous-time setting is also called potential function in discrete-time setting. In this work, we distinguish these two terms by reserving Lyapunov function for the continuous-time setting and potential function for the discrete-time setting. The title of this article emphasizes our preference to the continuous-time setting as the focal point. 

We present a two-phase framework for approximation algorithm design and analysis. In the first phase, consisting of four steps, we use Lyapunov function as a guideline to design continuous-time algorithm with a provable approximation ratio. In the second phase, consisting of two steps, we convert the continuous-time algorithm to a discrete-time algorithm with almost the same approximation ratio along with provable time complexity. 

Approximation ratio and time complexity are the two main objectives in any approximation algorithm design and analysis. Our two-phase framework decouples these two jobs. This sequential approach is dictated by the usual dominant priority on approximation ratio over time complexity. The first phase focuses on identifying and proving the approximation ratio in the continuous-time domain and the second phase takes on the time complexity issue in the discrete-time domain. Time complexity in the discrete-time setting consists of two ingredients: the number of oracle calls per iteration and the number of iterations. In our framework, we always guarantee the first factor is controlled in a polynomial number of oracle calls at each instant in the first phase. Therefore, we use the term ``time complexity'' in the discrete-time setting to refer only to the number of iterations.


The disintegration of the continuous-time and discrete-time settings allows us to leverage two large areas of existing knowledge. In the continuous-time domain, we can exploit more powerful tools, such as calculus, functional analysis, and differential equation, etc., resulting in much easier and cleaner mathematical treatments than their discrete-time counterparts. In the discrete-time domain, we can utilize another huge literature on numerical methods to convert continuous-time algorithm to discrete-time algorithm. The disintegration also allows us to pin down the factors that are respectively responsible for approximation ratio and time complexity and hence the algorithm design becomes more transparent. 

As a comparison, an alternative way for designing a discrete-time algorithm can skip the continuous-time phase and directly operates in the discrete-time domain to handle approximation ratio and time complexity simultaneously. While all the aforementioned decoupling benefits disappear in the direct method, sometimes there are also some advantages as to be explained in Section~\ref{eq:subsubsec:direct-method}.

\section{Relevant literature}\label{sec:literature-review}

Lyapunov function was originally used to study the stability of equilibria of an ODE, and hence has been widely used in stability theory of dynamical systems and control theory~\citep{Lyapunov1992general}. Systematic treatments of Lyapunov function approach have also been adopted in convex optimization field for convergence rate analysis, including both as a proof technique~(e.g., \cite{nemirovskij1983problem,polyak1987introduction,bansal2019potential}) and as an algorithm design technique (e.g., \cite{diakonikolas2019approximate}). 

\cite{bansal2019potential} focus on how to use Lyapunov function as a proof technique in convergence rate analysis of many first-order methods, but offer no insight on algorithm design. \cite{chen2021unified} present a unified convergence analysis for many first-order convex optimization methods via the strong Lyapunov function. \cite{diakonikolas2019approximate} is closely related to our work, as they propose a general technique for both convergence analysis and design of algorithms of many first-order methods in convex optimization. However, our focus here is on the design of approximation algorithm in non-convex optimization rather than exact methods in convex optimization. There is also a large body of literature surrounding Nesterov's accelerated method in which Lyapunov function based approaches are also utilized in their analysis, such as \cite{su2016differential,wibisono2016variational,wilson2018lyapunov,chen2021unified,wilson2021Lyapunov}, among many others.

Potential function based methods, as a proof technique in approximation algorithm analysis, have also been used in areas beyond convex optimization, e.g., \cite{buchbinder2015tight,badanidiyuru2020submodular,cohen2022improved}, etc., to name just a few.

The rest of this paper is organized as follows. In Section~\ref{subsec:algo-design-L-fun}, we describe the first phase of our framework on algorithm design via Lyapunov function in the continuous-time setting and illustrate the framework by examples from submodular maximization. In Section~\ref{sec:algo-design-L-function-dis}, we explain the second phase of the framework via potential function in the discrete-time setting and illustrate the framework with the same examples. Concluding remarks are provided in Section~\ref{sec:conclusion}

\section{Algorithm design via Lyapunov function: continuous-time}\label{subsec:algo-design-L-fun}

In Section~\ref{sec:Lyapunov_intro}, we introduce the concept of Lyapunov function of an algorithm in the continuous-time setting. In Section~\ref{subsec:algo-design-con-time}, we present a four-step process on how to use Lyapunov function to systematically design approximation algorithm. In Sections~\ref{subsec:mono-exp1}, \ref{subsec:mono-exp2}, and \ref{subsec:con-exp3}, we illustrate the first phase via examples from submodular maximization, which has been an active area of research (see, e.g., the survey by~\cite{buchbinder2018submodular}). 

Denote $\mathbb{R}^n$ as the $n$-dimension Euclidean space along with its Euclidean norm $\|x\|_2=\sqrt{\sum_{i=1}^n}x_i^2$, the infinite norm $\|x\|_{\infty}=\max_{i=1}^nx_i$, and inner product $\langle x, y\rangle=\sum_{i=1}^n x_iy_i$, $\forall x, y\in \mathbb{R}^n$. Denote $\mathbb{R}^n_{+}$ and $\mathbb{R}^n_{>0}$ as the nonnegative and positive vectors in $\mathbb{R}^n$, respectively. Denote $x^{+}=\max\{x, 0\}, \forall x\in\mathbb{R}$. We use boldface letters for vectors in $\mathbb{R}^n$. We use the ``dot" notation and the ``prime mark" interchangeably for time derivative, for example $\dot{x}(t)=(x(t))^{\prime}=\frac{dx}{dt}$. We use $\nabla$ for the gradient of a multivariate function.

\subsection{Lyapunov function}\label{sec:Lyapunov_intro}
Consider the following generic maximization problem
\begin{equation}\label{eq:generic_max_prob}
\max\limits_{\mathbf{x}\in C} F(\mathbf{x}),
\end{equation}
where $C\subseteq\mathbb{R}^n$ is a feasible set and $\mathbb{R}^n\ni \mathbf{x}\mapsto F(\mathbf{x})\in \mathbb{R}$ is a real-valued function. Let $\mathbf{x}^*$ be an optimal solution of (\ref{eq:generic_max_prob}):
\[
\mathbf{x}^*\in \operatorname*{argmax}\limits_{\mathbf{x}\in C} F(\mathbf{x}).
\]

\begin{defi}\label{def:algo-con} \emph{(Continuous-time algorithm)} A (continuous-time) algorithm for the problem (\ref{eq:generic_max_prob}) is a vector-valued function $\mathbf{x}(t)$:
\[
[0, T]\ni t\mapsto \mathbf{x}(t)\in\mathbb{R}^n, 
\]
starting from $\mathbf{x}(0)\in C$ at time 0 and outputting $\mathbf{x}(T)\in C$ at time $T$. 
\qed
\end{defi}

The above definition is restrictive on the output. A more general case can output a solution $\xi(\mathbf{x}(t))\in C$, where $\xi$ is a map which takes the $\mathbf{x}(t)\in C, \forall t\in [0, T]$, as an input and outputs a feasible solution. There is no need to even require $\mathbf{x}(t)\in C, \forall t\in [0, T]$, although in this paper we focus on algorithms satisfying $\mathbf{x}(t)\in C, \forall t\in [0, T]$. The most common choice is the one in the above definition $\xi(\mathbf{x}(t))=x(T)$, which will be used throughout this paper. Some other possibilities exist, such as $\xi(\mathbf{x}(t))=\int_0^T x(t)d\mu(t)$, which is feasible when $C$ is convex and $\mu$ is a probability measure.

An algorithm $\mathbf{x}(t)$ can often be described as a dynamical system:
\[
\dot{\mathbf{x}}(t)=\boldsymbol{\varphi}(\mathbf{x}(t)),
\]
where 
\[
\mathbb{R}^n\ni \mathbf{x}(t)\mapsto \boldsymbol{\varphi}(\mathbf{x}(t))\in\mathbb{R}^n.
\]

\begin{defi}\label{def:Lyapunov-fun} \emph{(Continuous-time Lyapunov function)} A function $[0, T]\ni t\mapsto E(\mathbf{x}(t))\in\mathbb{R}$ is a (continuous-time) \emph{Lyapunov function} associated with an algorithm $\mathbf{x}(t)$ in Definition~\ref{def:algo-con} if it is non-decreasing in $t\in [0, T]$.\qed
\end{defi}

Lyapunov function can take on many different parametric forms depending on the problems on hand. In approximation algorithm design and analysis, the following parametric form goes a long way in unifying many algorithms:
\begin{equation}\label{eq:Lyapunov-function-continuous-time}
E(\mathbf{x}(t))=a_tF(\mathbf{x}(t))-b_tF(\mathbf{x}^*),
\end{equation}
where $[0, T]\ni t\mapsto a_t, b_t\in \mathbb{R}_{+}$ are two functions belonging to certain problem-specific class $Q$. In the remaining of the article, we make the following assumptions on $a$ and $b$. 
\begin{assumption}\label{assume:q-b} Functions $[0, T]\ni t\mapsto a_t, b_t\in \mathbb{R}_{>}$ are non-negative, non-decreasing, and differentiable; namely, they belong to the following class of functions:
\begin{equation*}
Q=\{(a, b): a_0>0, \dot{a}_t\ge 0; b_0\ge 0, \dot{b}_t\ge 0\},
\end{equation*}\qed
\end{assumption}

The connection between the Lyapunov function of an algorithm and the approximation ratio of the algorithm is as follows. Given a Lyapunov function $E(\mathbf{x}(t))$ of an algorithm $\mathbf{x}(t)$, since $E$ is increasing in $t$, we have
\[
E(\mathbf{x}(T))=a_T F(\mathbf{x}(T))-b_T F(\mathbf{x}^*)\ge E(0)=a_0 F(\mathbf{x}(0))-b_0 F(\mathbf{x}^*),
\]
implying that the approximation ratio of the algorithm $\mathbf{x}(t)$ which outputs $\mathbf{x}(T)\in C$ is
\begin{equation}\label{eq:appro_ratio}
f(\mathbf{x}(T))\ge \frac{b_T-b_0}{a_T}F(\mathbf{x}^*)+\frac{a_0 }{a_T}F(\mathbf{x}(0))\ge \frac{b_T-b_0}{a_T}F(\mathbf{x}^*),
\end{equation}
where the last inequality holds whenever $F$ is nonnegative.

\subsection{Algorithm design: first phase}\label{subsec:algo-design-con-time}
Now we present the first phase of our algorithm design and analysis framework. This phase takes the Lyapunov function $E(\mathbf{x}(t))$ as an input and outputs an approximation algorithm with a provable approximation ratio. 

One distinctive feature of our framework is that we only need to know the parametric form of $E(\mathbf{x}(t))$ and leave the two function $a_t, b_t$ and the stopping time $T$ unspecified until the end to choose them so as to maximize the approximation ratio of the algorithm.

The first phase of the framework consists of four steps and we will present them in such a way that every step is a sufficient condition of the previous step. When proceeding forward, it explains how an algorithm is produced. When reading backward, it provides a formal proof of the approximation ratio of the produced algorithm.

\begin{description}
\item [Step 1. (Specify the parametric form of the Lyapunov function)] Pre-specifying the parametric form of Lyapunov function for a given problem is a prerequisite for our framework. Below we assume the Lyapunov function $E(\mathbf{x}(t))$ takes the parametric form in (\ref{eq:Lyapunov-function-continuous-time}) for some unknown algorithm $x(t), t\in [0, T]$. However, the basic principle applies to other parametric forms of Lyapunov function which will be discussed further in Section~\ref{sec:conclusion}. 

\item [Step 2. (Bound the optimal value)] A sufficient condition for $\mathbf{x}(t)$ to have an approximation ratio specified in (\ref{eq:appro_ratio}) is that $E(\mathbf{x}(t))$ is non-decreasing, which is equivalent to the non-negativeness of its  derivative:
\begin{equation*}
\dot{E}(\mathbf{x}(t))=\dot{a}_t F(\mathbf{x}(t))+a_t\langle\nabla F(\mathbf{x}(t)), \dot{\mathbf{x}}(t) \rangle -\dot{b}_t F(\mathbf{x}^*)\ge 0,
\end{equation*}
assuming all functions involved are differentiable. For non-smooth $F$, sub-gradients, if well-defined and exist, replace the role of gradients (e.g, the sub-gradient for convex function~\cite{rockafellar1970convex}, and the Clarke's generalized gradient for locally Lipschitz continuous functions~\cite{clarke1990optimization}, among others). 

Because Algorithm $\mathbf{x}(t)$ cannot use any information on $F(\mathbf{x}^*)$, one sufficient condition for $\dot{E}(\mathbf{x}(t))\ge 0$ is to replace the term $F(\mathbf{x}^*)$ therein with an upper bound $U_t\ge F(\mathbf{x}^*)$ such that $U_t$ does not depend on the optimal solution $\mathbf{x}^*$ (may depend on $\mathbf{x}(s), s\in [0, t]$), and then show that the following lower bound on $\dot{E}(\mathbf{x}(t))$ is non-negative:
\begin{equation}\label{eq:L-fun-G}
\dot{E}(\mathbf{x}(t))\overset{F(\mathbf{x}^*)\le U_t}{\ge} \dot{a}_t F(\mathbf{x}(t))+a_t\langle\nabla F(\mathbf{x}(t)), \dot{\mathbf{x}}(t) \rangle -\dot{b}_t U_t\ge 0
\end{equation}

Now the critical part is to find a useful upper bound $U_t$ which will be problem-specific. However for a large class of problems, first-order algorithms (only function values and its gradients are involved) can be employed, in which cases, an upper bound $U_t$ normally takes on the following parametric form:
\begin{equation}\label{eq:U_t-form}
U_t=c(\mathbf{x}(t))F(\mathbf{x}(t))+d(t)\langle \nabla F(\mathbf{x}(t), \mathbf{u}(\mathbf{x}(t)) \rangle
\end{equation}
where $[0, T]\ni t\mapsto c(\mathbf{x}(t))\in \mathbb{R}_{>0}$ is a positive scalar functions of $\mathbf{x}(t)$  and $\mathbb{R}^n\ni \mathbf{x}(t)\mapsto \mathbf{u}(\mathbf{x}(t))\in \mathbb{R}^n$ is a vector-valued function of $\mathbf{x}(t)$ such that its norm is no more than the the diameter of $C$: 
\begin{equation}\label{eqn:diamter}
\|\mathbf{u}(\mathbf{x}(t))\|^2\le D=\max_{x, y\in C}\|x-y\|^2. 
\end{equation}
Note that the functions $c(\cdot)$ and $d(\cdot)$ may have extra constraints related to the algorithm $\mathbf{x}(t)$. We collect all these constraints into a set $O$. 

With the upper bound $U_t$ in (\ref{eq:U_t-form}), then (\ref{eq:L-fun-G}) becomes
\begin{eqnarray}
\nonumber\dot{E}(\mathbf{x}(t))&\ge& \dot{a}_t F(\mathbf{x}(t))+a_t\langle\nabla F(\mathbf{x}(t)), \dot{\mathbf{x}}(t) \rangle -\dot{b}_t \left[c(\mathbf{x}(t))F(\mathbf{x}(t))+\langle \nabla F(X(t), d(t)\mathbf{u}(\mathbf{x}(t)) \rangle\right]\\
\label{eq:bound-F-star}&=&[\dot{a}_t-\dot{b}_tc(\mathbf{x}(t))]F(\mathbf{x}(t))+\langle\nabla F(\mathbf{x}(t)), a_t\dot{\mathbf{x}}(t)-\dot{b}_t d(t)\mathbf{u}(\mathbf{x}(t)) \rangle\ge 0
\end{eqnarray}

\item[Step 3. (Produce an algorithm)] Assume that $F$ is non-negative from now on.  From (\ref{eq:bound-F-star}), if we guarantee that the first term is nonnegative and second term is zero, then one sufficient condition for $\dot{E}(\mathbf{x}(t))\ge 0$ is as follows: 
\begin{eqnarray}
\label{eq:appro-ratio}\dot{a}_t&\ge &\dot{b}_tc(\mathbf{x}(t))\\
\label{eq:appro-alg}\dot{\mathbf{x}}(t)&=&\frac{\dot{b}_t}{a_t}d(t)\mathbf{u}(\mathbf{x}(t))\\
\label{eq:appro-O} c(\mathbf{x}(t))&\in& O
\end{eqnarray}

The ordinary differential equation (ode) in (\ref{eq:appro-alg}) is exactly the algorithm we have been seeking. However, to make sure this algorithm is valid (namely, (\ref{eq:appro-ratio})-(\ref{eq:appro-O}) is a feasible system), and indeed outputs a feasible solution (namely, $\mathbf{x}(T)\in C$), extra constraints on the two parametric functions $a_t$, $b_t$ and the termination time $T$ may arise. We collect all these constraints into a set 
\begin{equation}\label{eq:P}
P=\{(a, b, T): (\ref{eq:appro-ratio})-(\ref{eq:appro-O}) \text{ feasible system},\ \mathbf{x}(T)\in C\}.
\end{equation}
\item[Step 4. (Analyze the approximation ratio)] Finally, with our algorithm specified by the ode in (\ref{eq:appro-alg}), 
the best approximation ratio of this algorithm is to choose the two functions $a_t$, $b_t$, and the termination time $T$ to maximize the approximation ratio subject to (\ref{eq:appro-ratio})-(\ref{eq:appro-O}), the extra constraints $P$ in (\ref{eq:P}) from the feasibility of the algorithm, and any assumptions on $(a, b)$ which are collected in $Q$ as defined in Assumption~\ref{assume:q-b}:
\begin{eqnarray*}
&\sup_{a, b, T}& \frac{b_T-b_0}{a_T}\\
&\text{s.t.}&(a, b)\in Q\\
&& (a, b, T)\in P
\end{eqnarray*}
The last problem is a variational problem and sometimes we can find closed-form solutions in applications. 
\qed
\end{description}

One power of the proposed framework is to unify existing results and offer new perspectives on known results. In the next section we will present a few representative examples from the field of submodular maximization to illustrate this power. However, we believe the more promising power of this framework lies in developing improved algorithms for old problems and designing algorithms for new problems.  

We will use the following submodular maximization problem along with its variants as running examples to illustrate our framework, although the framework is applicable to any optimization problem in principle.
\begin{equation}\label{eq:sub_max_prob}
\max\limits_{\mathbf{x}\in C} F(\mathbf{x}).
\end{equation}
\begin{assumption}\label{assum:sub-oracle}
In problem (\ref{eq:sub_max_prob}), we make the following assumptions:
\begin{enumerate}[(1)]
\item $C\subseteq [0,1]^n$ is a convex set with diameter $D$ as in (\ref{eqn:diamter}), and $\mathbf{0}=(0,\ldots, 0)\in C$. 
\item $F:[0,1]^n\mapsto \mathbb{R}^{+}$ is a non-negative differentiable DR-submodular function~\citep{bian2017continuous}: $\forall \mathbf{x}\ge \mathbf{y}\in [0,1]^n; \forall \ell >0: \mathbf{x}+\ell \mathbf{e}_i, \mathbf{y}+\ell \mathbf{e}_i\in [0,1]^n$,
\[
F(\mathbf{x}+\ell \mathbf{e}_i)-F(\mathbf{x})\le F(\mathbf{y}+\ell \mathbf{e}_i)-F(\mathbf{y}), \forall i=1,\ldots, n
\]
\[
\bigg\Updownarrow
\]
\[
\nabla F(\mathbf{x})\le \nabla F(\mathbf{y}).
\]
\item  Three oracles are available: (i) computing the function value $F(\mathbf{x})$ at any given point $\mathbf{x}\in C$, (ii) computing its gradient $\nabla F(\mathbf{x})$ at any given point $\mathbf{x}\in C$, and (iii) maximizing a linear function over $C$, in which case, $C$ is called a solvable convex set.\qed
\end{enumerate}
\end{assumption}


Let $\mathbf{x}^*$ be an optimal solution of (\ref{eq:sub_max_prob})
\[
\mathbf{x}^*\in\operatorname*{argmax}\limits_{\mathbf{x}\in C} F(\mathbf{x}).
\]

Since $F$ is DR-submodular, we have the following well-known inequality~\citep{feldman2011unified,hassani2017gradient}: $\forall \mathbf{x}, \mathbf{y}\in [0,1]^n$:
\begin{eqnarray}\label{eq:DR-sub-LB}
\langle \nabla F(\mathbf{x}), \mathbf{y}-\mathbf{x}\rangle  &\ge& F(\mathbf{x}\vee \mathbf{y})+F(\mathbf{x}\wedge \mathbf{y})-2F(\mathbf{x}).
\end{eqnarray}

Different upper bounds $U_t$ on $F(\mathbf{x}^*)$ will be derived from the last inequality (\ref{eq:DR-sub-LB}), depending on the objective function $F$ and the feasible region $C$ of the problem. 
For illustration purpose, in the following examples, we closely follow the four-step process as prescribed earlier.

\subsection{Example 1: Nonnegative monotone DR-submodular and solvable convex set}\label{subsec:mono-exp1}

Assume that $F$ is a non-negative monotone differentiable DR-submodular function, and $C$ is a solvable convex set. 
\begin{description}
\item [Step 1.] The Lyapunov function is 
\[
E(\mathbf{x}(t))=a_tF(\mathbf{x}(t))-b_tF(\mathbf{x}^*).
\]
Its derivative is 
\[
\dot{E}(\mathbf{x}(t))=\dot{a}_t F(\mathbf{x}(t))+a_t\langle\nabla F(\mathbf{x}(t), \dot{\mathbf{x}}(t) \rangle -\dot{b}_tF(\mathbf{x}^*).
\]
\item [Step 2.] The upper bound $U_t$ follows from (\ref{eq:DR-sub-LB}) by taking $\mathbf{x}=\mathbf{x}(t)$, $\mathbf{y}=\mathbf{x}(t)\vee \mathbf{x}^*$, and applying monotonicity: 
\begin{eqnarray*}
\langle \nabla F(\mathbf{x}(t)), \mathbf{x}^*\rangle&\overset{F\text{ monotone}}{\ge}& \langle \nabla F(\mathbf{x}(t)), \underbrace{\mathbf{x}(t)\vee \mathbf{x}^*-\mathbf{x}(t)}_{\le \mathbf{x}^*}\rangle\\
&\ge& F(\mathbf{x}(t)\vee \mathbf{x}^*)-F(\mathbf{x}(t))\overset{F\text{ monotone}}{\ge} F(\mathbf{x}^*)-F(\mathbf{x}(t)),
\end{eqnarray*}
implying that 
\begin{eqnarray}
\nonumber F(\mathbf{x}^*)&\le& F(\mathbf{x}(t))+\langle \nabla F(\mathbf{x}(t)), \mathbf{x}^*\rangle\\
\nonumber &\le& \max_{\mathbf{v}\in C}\left[F(\mathbf{x}(t))+\langle \nabla F(\mathbf{x}(t)), \mathbf{v}\rangle\right]\\
\label{eq:opt-ub-mono}&=& U_t:= F(\mathbf{x}(t))+\langle \nabla F(\mathbf{x}(t)), \mathbf{v}(\mathbf{x}(t))\rangle, 
\end{eqnarray}
where
\begin{equation}\label{eq:v-mono-dr-sub-gene-C}
\mathbf{v}(\mathbf{x}(t)):=\arg\max_{\mathbf{v}\in C}\left[F(\mathbf{x}(t))+\langle \nabla F(\mathbf{x}(t)), \mathbf{v}\rangle\right]=\arg\max_{\mathbf{v}\in C}\langle \nabla F(\mathbf{x}(t)), \mathbf{v}\rangle
\end{equation}
This is an optimization problem with linear objective function over a solvable convex set $C$. An efficient oracle exists for this problem by Assumption~\ref{assum:sub-oracle}(3-iii). Note that in this example $c(\mathbf{x}(t))=1$, $d(t)=1$, and $u(\mathbf{x}(t))=\mathbf{v}(\mathbf{x}(t))$. There is no extra constraint on $O$. 

Because (\ref{eq:opt-ub-mono}) is our desired $U_t$ which involves no quantity surrounding $\mathbf{x}^*$, we obtain
\begin{eqnarray*}
\dot{E}(\mathbf{x}(t))&\ge &\left(\dot{a}_t-\dot{b}_t\right)F(\mathbf{x}(t))+\left\langle\nabla F(\mathbf{x}(t)), a_t\dot{\mathbf{x}}(t)-\dot{b}_t\mathbf{v}(\mathbf{x}(t))\right\rangle
\end{eqnarray*}
\item [Step 3.] A sufficient condition to guarantee that $\dot{E}(\mathbf{x}(t))\ge 0$ is as follows:
\begin{eqnarray*}
\dot{a}_t&=& \dot{b}_t\\
\nonumber\dot{\mathbf{x}}(t)&=& \frac{\dot{b}_t}{a_t}\mathbf{v}(\mathbf{x}(t))=\frac{\dot{a}_t}{a_t}\mathbf{v}(\mathbf{x}(t)),
\end{eqnarray*}
where $\mathbf{v}(\mathbf{x}(t))$ is defined in (\ref{eq:v-mono-dr-sub-gene-C}).

Based on these choices, our algorithm is presented below. 
\begin{algorithm}[H]
    \caption{}
    \label{alg:FW-Con-time-Mono}
    \begin{algorithmic}[1] 
        \State Input: a non-negative monotone differentiable DR-submodular function $F$; a solvable convex set $C$; and $(a, b)\in Q$ satisfying:
        \begin{equation}
        \label{eq:ode_alg_a-t-b-t-mono}\dot{a}_t=\dot{b}_t
        \end{equation}
        \Procedure{Con-Time-Mono}{$F,C, a, b$} 
            \State $\mathbf{x}(0)=\mathbf{0}\in C$ \Comment{Initialization}
            \For{$t\in [0, T]$} \Comment{$T$ is the stopping time}
             \begin{eqnarray*}
\mathbf{v}(\mathbf{x}(t))&=&\arg\max_{\mathbf{v}\in C}\langle\nabla F(\mathbf{x}(t)), \mathbf{v}\rangle\\
\dot{\mathbf{x}}(t)&=&\frac{\dot{a}_t}{a_t}\mathbf{v}(\mathbf{x}(t))=\left(\ln a_t\right)^{\prime}\mathbf{v}(\mathbf{x}(t))
\end{eqnarray*}
\EndFor\label{FW-Confor-time-mono}
            \State \textbf{return} $\mathbf{x}(T)$ 
        \EndProcedure
    \end{algorithmic}
\end{algorithm}

This problem was studied in \cite{calinescu2011maximizing}) and is a variant of the classical Frank-Wolfe algorithm~\citep{frank1956algorithm}). We can recover their \emph{continuous greedy} algorithm by choosing $a_t=e^t, t\in [0,1]$. 

With an algorithm designed under the guidance of the Lyapunov function, we need to make sure the final output is a feasible solution. 
\[
\mathbf{x}(T)=\int_0^T\left(\ln a_t\right)^{\prime}\mathbf{v}(\mathbf{x}(t))dt=\int_0^T\mathbf{v}(\mathbf{x}(t))d\ln a_t
\]
Since $\mathbf{v}(\mathbf{x}(t))\in C, \forall t\in [0, T]$, a sufficient condition for $\mathbf{x}(T)\in C$ is that we choose $a_t$ such that $\ln a_t$ is a cumulative distribution function (cdf) on $[0, T]$. Then $\mathbf{x}(T)$ is a convex combination of feasible solutions and hence also feasible. So the extra constraint set $P$ demands that $\ln a_t$ is a cdf or equivalently $\ln a_0=0, \ln a_T=1, \left(\ln a_t\right)^{\prime}\ge 0$ and a sufficient condition is 
\begin{equation}\label{eq:P-example1}
P=\{(a, b, T): (a, b)\in Q, \ln a_0=0, \ln a_T=1, \dot{a}_t>0\},
\end{equation}
where $Q$ is defined in Assumption~\ref{assume:q-b}.  

\item [Step 4.] Finally, we choose $a_t$, $b_t$ and $T$ to maximize the approximation ratio in (\ref{eq:appro_ratio}) subject to (\ref{eq:ode_alg_a-t-b-t-mono}), $(a, b)\in Q$, 
$(a, b, T)\in P$, and $\ln a_t$ is a cdf on $[0, T]$, where $Q$ and $P$ are defined in Assumption~\ref{assume:q-b} and (\ref{eq:P-example1}), respectively. 
\begin{eqnarray*}
&\sup\limits_{a_t, b_t,T}&\frac{b_T-b_0}{a_T}\\
&\text{s.t.} & \dot{a}_t=\dot{b}_t\\
&& a_0>0, \dot{a}_t\ge 0; b_0\ge 0, \dot{b}_t\ge 0\\
&&\ln a_0=0, \ln a_T=1, \dot{a}_t>0
\end{eqnarray*}
Or equivalently by removing $b_t$,
\begin{eqnarray*}
 &\sup\limits_{a_t, T}&\frac{a_T-a_0}{a_T}\\
 &\text{s.t.}& a_0>0, \dot{a}_t\ge 0;\\
 & & \ln a_0=0, \ln a_T=1, \dot{a}_t>0
\end{eqnarray*}

We now show that the optimal approximation ratio is $1-\frac{1}{e}$ for Algorithm~\ref{alg:FW-Con-time-Mono}. 
\begin{proof}
First, the last constraint implies that the objective value is equal to 
\[
\frac{b_T-b_0}{a_T}=1-\frac{1}{a_T} = 1-\frac{1}{e}.
\]

Second, choose $a_t=b_t=e^t$ and $T=1$. Then $\ln a_t=t$ is a cdf and the approximation ratio is 
\[
F(\mathbf{x}(1))\ge \frac{b_1-b_0}{a_1}F(\mathbf{x}^*)=\left(1-\frac{1}{e}\right)F(\mathbf{x}^*),
\]
because $F(\mathbf{x}(0))\ge 0$, as in Assumption~\ref{assum:sub-oracle}(2). 

Together, the best approximation ratio of the algorithm is $1-\frac{1}{e}$.
\end{proof}

\end{description}

Reading these four-step backward is a rigorous proof that Algorithm~\ref{alg:FW-Con-time-Mono} has an approximation ratio $1-\frac{1}{e}$. However, a shorter and nicer proof can be constructed following the conventional theorem-proof style. For this problem only, below we present such a proof which can be safely skipped without affecting the flow of the remaining article. This proof is a case-in-point on how previous literature has used Lyapunov function only as a {proof technique}, while usually being silent on explaining the {design} of algorithms.

\begin{shaded}
Define the following Lyapunov function associated with Algorithm~\ref{alg:FW-Con-time-Mono} as follows:
\begin{eqnarray}
\label{eq:fw_continuous_Lyapunov_function_monotonoe_sub}
E(\mathbf{x}(t)) &=&e^t(F(\mathbf{x}(t))-F(\mathbf{x}^*))
\end{eqnarray}
\begin{lem}
$E(\mathbf{x}(t))$ is a non-decreasing function.
\end{lem}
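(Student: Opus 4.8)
The plan is to differentiate $E(\mathbf{x}(t))=e^t(F(\mathbf{x}(t))-F(\mathbf{x}^*))$ with respect to $t$ and show the derivative is nonnegative, using the dynamics of Algorithm~\ref{alg:FW-Con-time-Mono} (with $a_t=e^t$, so $(\ln a_t)'=1$ and $\dot{\mathbf{x}}(t)=\mathbf{v}(\mathbf{x}(t))$) together with the DR-submodular bound on the optimal value. First I would compute
\[
\dot{E}(\mathbf{x}(t))=e^t\bigl(F(\mathbf{x}(t))-F(\mathbf{x}^*)\bigr)+e^t\langle\nabla F(\mathbf{x}(t)),\dot{\mathbf{x}}(t)\rangle
=e^t\Bigl(F(\mathbf{x}(t))-F(\mathbf{x}^*)+\langle\nabla F(\mathbf{x}(t)),\mathbf{v}(\mathbf{x}(t))\rangle\Bigr).
\]
Since $e^t>0$, it suffices to show $F(\mathbf{x}(t))+\langle\nabla F(\mathbf{x}(t)),\mathbf{v}(\mathbf{x}(t))\rangle\ge F(\mathbf{x}^*)$, i.e. that $U_t\ge F(\mathbf{x}^*)$ for the $U_t$ defined in (\ref{eq:opt-ub-mono}).

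That inequality is exactly what was established in Step 2 above: applying the DR-submodular inequality (\ref{eq:DR-sub-LB}) with $\mathbf{x}=\mathbf{x}(t)$ and $\mathbf{y}=\mathbf{x}(t)\vee\mathbf{x}^*$ gives $\langle\nabla F(\mathbf{x}(t)),\mathbf{x}(t)\vee\mathbf{x}^*-\mathbf{x}(t)\rangle\ge F(\mathbf{x}(t)\vee\mathbf{x}^*)+F(\mathbf{x}(t)\wedge\mathbf{x}^*)-2F(\mathbf{x}(t))$; monotonicity of $F$ then bounds the left side above by $\langle\nabla F(\mathbf{x}(t)),\mathbf{x}^*\rangle$ (since $\mathbf{x}(t)\vee\mathbf{x}^*-\mathbf{x}(t)\le\mathbf{x}^*$ and $\nabla F\ge 0$) and bounds the right side below by $F(\mathbf{x}^*)-F(\mathbf{x}(t))$ (dropping the nonnegative $F(\mathbf{x}(t)\wedge\mathbf{x}^*)$ and using $F(\mathbf{x}(t)\vee\mathbf{x}^*)\ge F(\mathbf{x}^*)$). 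Rearranging yields $F(\mathbf{x}^*)\le F(\mathbf{x}(t))+\langle\nabla F(\mathbf{x}(t)),\mathbf{x}^*\rangle\le \max_{\mathbf{v}\in C}[F(\mathbf{x}(t))+\langle\nabla F(\mathbf{x}(t)),\mathbf{v}\rangle]=U_t$, where the middle step uses $\mathbf{x}^*\in C$. Substituting back gives $\dot{E}(\mathbf{x}(t))\ge 0$, so $E$ is non-decreasing on $[0,T]$.

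I anticipate no serious obstacle; the only points requiring a little care are the chain-rule computation of $\dot{\mathbf{x}}(t)$ being well-defined (which requires the solution of the ODE to exist and $\mathbf{v}(\mathbf{x}(t))$ to be selected measurably, something the paper implicitly assumes) and making sure the two uses of monotonicity are applied in the correct direction. Everything else is a direct recombination of the displayed inequalities from Step 2 with the particular choice $a_t=e^t$.
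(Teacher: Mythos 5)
Your proof is correct and follows essentially the same route as the paper: differentiate $E$, substitute $\dot{\mathbf{x}}(t)=\mathbf{v}(\mathbf{x}(t))$ from the dynamics with $a_t=e^t$, and invoke the upper bound $F(\mathbf{x}^*)\le U_t$ from (\ref{eq:opt-ub-mono}), which the paper cites directly rather than re-deriving. One tiny wording slip: with $\mathbf{y}=\mathbf{x}(t)\vee\mathbf{x}^*$ the meet term in (\ref{eq:DR-sub-LB}) is $F(\mathbf{x}(t)\wedge\mathbf{y})=F(\mathbf{x}(t))$ exactly (it cancels against one of the $-2F(\mathbf{x}(t))$ terms), not a nonnegative $F(\mathbf{x}(t)\wedge\mathbf{x}^*)$ to be dropped, but this does not affect the argument.
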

\begin{proof}
\begin{eqnarray*}
\dot{E}(\mathbf{x}(t)) &=&e^t\left(F(\mathbf{x}(t))+\langle\nabla f(\mathbf{x}(t)), \dot{\mathbf{x}}(t)\rangle\right)-e^tF(\mathbf{x}^*)\\
&\overset{(\ref{eq:opt-ub-mono})}{\ge}&e^t\left(F(\mathbf{x}(t))+\langle\nabla f(\mathbf{x}(t)), \dot{\mathbf{x}}(t)\rangle\right)-e^t\left( F(\mathbf{x}(t))+\langle\nabla f(\mathbf{x}(t)), \dot{\mathbf{x}}(t)\rangle\right)\\
&=&0
\end{eqnarray*}
\end{proof}
\begin{thm} 
The approximation ratio of Algorithm~\ref{alg:FW-Con-time-Mono} is $1-\frac{1}{e}$.
\end{thm}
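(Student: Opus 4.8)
The plan is to run the four-step construction backward, invoking the lemma just stated (that $E(\mathbf{x}(t)) = e^t\big(F(\mathbf{x}(t)) - F(\mathbf{x}^*)\big)$ is non-decreasing on $[0,1]$) together with the concrete parameter choice $a_t = b_t = e^t$, $T = 1$ singled out in Step~4. First I would check that this choice is admissible: $\dot a_t = \dot b_t = e^t$, so (\ref{eq:ode_alg_a-t-b-t-mono}) holds; $a_0 = 1 > 0$, $b_0 = 1 \ge 0$, and both functions are non-decreasing and differentiable, so $(a,b) \in Q$; and $\ln a_t = t$ satisfies $\ln a_0 = 0$, $\ln a_1 = 1$, $\dot a_t > 0$, so $(a,b,1) \in P$ and $\ln a_t$ is a cdf on $[0,1]$. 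Under these choices Algorithm~\ref{alg:FW-Con-time-Mono} integrates the ODE $\dot{\mathbf{x}}(t) = \mathbf{v}(\mathbf{x}(t))$ from $\mathbf{x}(0) = \mathbf{0}$.

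Next I would record feasibility of the output. Since $d\ln a_t = dt$ is the uniform probability measure on $[0,1]$ and each $\mathbf{v}(\mathbf{x}(t)) \in C$ by definition (\ref{eq:v-mono-dr-sub-gene-C}), the point $\mathbf{x}(1) = \int_0^1 \mathbf{v}(\mathbf{x}(t))\, dt$ is an average of points of the convex set $C$, hence $\mathbf{x}(1) \in C$; so the algorithm indeed returns a feasible solution. Then the ratio follows in one line: applying the lemma gives $E(\mathbf{x}(1)) \ge E(\mathbf{x}(0))$, that is,
\[
e\big(F(\mathbf{x}(1)) - F(\mathbf{x}^*)\big) \;\ge\; F(\mathbf{x}(0)) - F(\mathbf{x}^*),
\]
so that $e\,F(\mathbf{x}(1)) \ge (e-1)F(\mathbf{x}^*) + F(\mathbf{x}(0))$. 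Because $F$ is non-negative (Assumption~\ref{assum:sub-oracle}(2)), $F(\mathbf{x}(0)) = F(\mathbf{0}) \ge 0$, whence
\[
F(\mathbf{x}(1)) \;\ge\; \Big(1 - \tfrac{1}{e}\Big) F(\mathbf{x}^*),
\]
which is the claimed approximation ratio; note this matches the specialization $\frac{b_T - b_0}{a_T} = 1 - \tfrac{1}{e}$ of the general bound (\ref{eq:appro_ratio}), so no separate argument for the constant is needed.

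The only genuinely delicate point is well-posedness of the continuous-time dynamical system itself: the map $\mathbf{x} \mapsto \mathbf{v}(\mathbf{x}) = \arg\max_{\mathbf{v}\in C}\langle \nabla F(\mathbf{x}), \mathbf{v}\rangle$ is in general neither single-valued nor continuous, so the existence of an absolutely continuous trajectory $\mathbf{x}(t)$ along which the chain-rule computation of $\dot E$ in the lemma is valid requires a word — for instance, fixing a measurable selection of the argmax and working with the resulting Carathéodory/Filippov solution, and observing that $\dot E(\mathbf{x}(t)) \ge 0$ holds for a.e.\ $t$, which still forces $E$ to be non-decreasing by integration. Everything else reduces to the admissibility check, the convexity argument for feasibility, and the one-line rearrangement above, so I expect the write-up to be short.
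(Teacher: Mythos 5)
Your proof is correct and follows essentially the same route as the paper's: invoke the lemma that $E(\mathbf{x}(t))=e^t\big(F(\mathbf{x}(t))-F(\mathbf{x}^*)\big)$ is non-decreasing, compare $E(\mathbf{x}(1))$ with $E(\mathbf{x}(0))$, and rearrange using $F(\mathbf{0})\ge 0$. The extra checks you add --- admissibility of $a_t=b_t=e^t$, feasibility of $\mathbf{x}(1)$ via the cdf/convex-combination argument, and the caveat about well-posedness of the argmax ODE --- are points the paper addresses (or glosses over) in the surrounding four-step discussion rather than inside this proof, so they strengthen the write-up without constituting a different approach.
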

\begin{proof}
The approximation ratio is implied by the monotonicity of $E(\mathbf{x}(t))$:
\begin{eqnarray*}
E(1)\ge E(0)&\iff& e(F(\mathbf{x}(1))-F(\mathbf{x}^*)) \ge F(\mathbf{x}(0))-F(\mathbf{x}^*) \\
&\iff& F(\mathbf{x}(1))\ge (1-e^{-1})F(\mathbf{x}^*)
\end{eqnarray*}
assuming that $F(0)\ge 0$.
\end{proof}
\end{shaded}
Although this conventional theorem-proof style takes much less time and easier to read than the one we presented in Section~\ref{subsec:mono-exp1}, however, the extra time spent on wondering how these nice proofs are constructed in the first place can be taxing. Hopefully our framework has assisted in the latter aspect. 

\subsection{Example 2: Nonnegative non-monotone DR-submodular and down-closed solvable convex set}\label{subsec:mono-exp2}

Assume that $F$ is a non-negative non-monotone differentiable DR-submodular function, and $C$ is a solvable down-closed convex set, where $C$ is down-closed if $0\le \mathbf{x}\le \mathbf{y}\in C$ then $\mathbf{x}\in C$.

\begin{description}
\item [Step 1.] The Lyapunov function is 
\[
E(\mathbf{x}(t))=a_tF(\mathbf{x}(t))-b_tF(\mathbf{x}^*)
\]
Its derivative is 
\[
\dot{E}(\mathbf{x}(t))=\dot{a}_t F(\mathbf{x}(t))+a_t\langle\nabla F(\mathbf{x}(t), \dot{\mathbf{x}}(t) \rangle -\dot{b}_tF(\mathbf{x}^*)
\]

\item [Step 2.] The upper bound $U_t$ follows from (\ref{eq:DR-sub-LB}) by taking $\mathbf{x}=\mathbf{x}(t)$ and $\mathbf{y}:=\mathbf{x}(t)\vee \mathbf{x}^*$,
\[
\langle \nabla F(\mathbf{x}(t)), \mathbf{x}(t)\vee \mathbf{x}^*-\mathbf{x}(t)\rangle  \ge F(\mathbf{x}(t)\vee \mathbf{x}^*)-F(\mathbf{x}(t))
\ge (1-\theta(\mathbf{x}(t)))F(\mathbf{x}^*)-F(\mathbf{x}(t))
\]
where $1>\theta(\mathbf{x}(t))\ge \|\mathbf{x}(t)\|_{\infty}=\max_{i=1}^n x_i(t)$ and the last inequality is from~\cite[Lemma 3.5]{feldman2011unified}. Therefore
\begin{eqnarray}
\nonumber F(\mathbf{x}^*)&\le& \frac{F(\mathbf{x}(t))+\langle \nabla F(\mathbf{x}(t)), \mathbf{x}(t)\vee \mathbf{x}^* -\mathbf{x}(t)\rangle }{1-\theta(\mathbf{x}(t))}\\
\label{eq:opt-ub-down-closed-max-prob1} &\overset{\mathbf{x}^*\in C}{\le}& \max_{\mathbf{y}\in C}\frac{F(\mathbf{x}(t))+\langle \nabla F(\mathbf{x}(t)), \mathbf{x}(t)\vee \mathbf{y} -\mathbf{x}(t)\rangle }{1-\theta(\mathbf{x}(t))}\\
\label{eq:opt-ub-down-closed-max-prob1-equ}&=&\max_{\mathbf{v}=\mathbf{x}(t)\vee \mathbf{y} -\mathbf{x}(t), \mathbf{y}\in C}\frac{F(\mathbf{x}(t))+\langle \nabla F(\mathbf{x}(t)), v\rangle }{1-\theta(\mathbf{x}(t))}\\
\label{eq:opt-ub-down-closed-max-prob2}&\le& \max_{\mathbf{v}\in C, \mathbf{v}\le \mathbf{1}-\mathbf{x}}\frac{F(\mathbf{x}(t))+\langle \nabla F(\mathbf{x}(t)), v\rangle }{1-\theta(\mathbf{x}(t))}\\
\label{eq:opt-ub-down-closed}&=&\frac{F(\mathbf{x}(t))+\langle \nabla F(\mathbf{x}(t)),  \mathbf{v}(\mathbf{x}(t)) \rangle }{1-\theta(\mathbf{x}(t))},
\end{eqnarray}
where
\begin{equation}\label{eq:v-opt-donw-closed}
\mathbf{v}(\mathbf{x}(t)):=\arg\max_{\mathbf{v}\in C, \mathbf{v}\le \mathbf{1}-\mathbf{x}}\left[\frac{F(\mathbf{x}(t))+\langle \nabla F(\mathbf{x}(t)), v\rangle }{1-\theta(\mathbf{x}(t))} \right]=\arg\max_{\mathbf{v}\in C, \mathbf{v}\le \mathbf{1}-\mathbf{x}}\langle \nabla F(\mathbf{x}(t)), \mathbf{v}\rangle
\end{equation}

Note that in this example, $c(\mathbf{x}(t))=\frac{1}{1-\theta(\mathbf{x}(t))}$, $d(t)=\frac{1}{1-\theta(\mathbf{x}(t))}$ and $\mathbf{u}(\mathbf{x}(t))=\mathbf{v}(\mathbf{x}(t))$. We have the following extra constraints collected in 
\begin{equation}\label{eq:O-down-closed}
O:=\left\{c(\mathbf{x}(t)): 0<\frac{1}{c}\le 1-\|x(t)\|_{\infty}, t\in [0, T]\right\}.
\end{equation}

Note further that the maximization problem in (\ref{eq:opt-ub-down-closed-max-prob1}) is NP-hard in general because its objective function is piece-wise linear convex. To overcome this difficulty, we continue relaxing the upper bound to obtain a new maximization problem in (\ref{eq:opt-ub-down-closed-max-prob2}), which has a linear objective function in the new decision variable $v$ and a new solvable region. An efficient oracle exists for this problem by Assumption~\ref{assum:sub-oracle}(3-iii). 

We now show that (\ref{eq:opt-ub-down-closed-max-prob2}) is indeed a relaxation of (\ref{eq:opt-ub-down-closed-max-prob1-equ}), which is equivalent to (\ref{eq:opt-ub-down-closed-max-prob1}). 
\begin{proof}
First they have the same objective function. Second we show that the down-closeness of $C$ implies that (\ref{eq:opt-ub-down-closed-max-prob2}) has a larger feasible region than that of (\ref{eq:opt-ub-down-closed-max-prob1-equ}). $\forall \mathbf{v}, \mathbf{y}\in \{(\mathbf{v},\mathbf{y}): \mathbf{v}=\mathbf{x}(t)\vee \mathbf{y} -\mathbf{x}(t), \mathbf{y}\in C\}$, we have
\begin{eqnarray*}
\mathbf{v}&=&\mathbf{x}(t)\vee \mathbf{y} -\mathbf{x}(t)\le \mathbf{y}\in C\Longrightarrow \mathbf{v}\in C\\
\mathbf{v}&=&\mathbf{x}(t)\vee \mathbf{y} -\mathbf{x}(t)\overset{\mathbf{y}\le \mathbf{1}}\le \mathbf{1}-\mathbf{x}(t)
\end{eqnarray*}
Namely,
\[
\mathbf{v}\in \{\mathbf{v}\in C: \mathbf{v}\le \mathbf{1}-\mathbf{x}\}
\]
\end{proof}

Because (\ref{eq:opt-ub-down-closed}) is our desired $U_t$ which involves no quantity surrounding $\mathbf{x}^*$, we obtain 
\begin{eqnarray}
\label{eq:L-fun-derivative-down-closed}\dot{E}(\mathbf{x}(t))&\ge &\left(\dot{a}_t-\frac{\dot{b}_t}{1-\theta(\mathbf{x}(t))}\right)F(\mathbf{x}(t))+\left\langle\nabla F(\mathbf{x}(t)), a_t\dot{\mathbf{x}}(t)-\frac{\dot{b}_t\mathbf{v}(\mathbf{x}(t))}{1-\theta(\mathbf{x}(t))}\right\rangle
\end{eqnarray}
\item [Step 3.] Guided by the last quantity (\ref{eq:L-fun-derivative-down-closed}) and the condition (\ref{eq:O-down-closed}) imposed in Step 2., we are ready to design an algorithm to guarantee that $\dot{E}(t)\ge 0$. A sufficient condition is the following:
\begin{eqnarray}
\nonumber\dot{a}_t&\ge & \frac{\dot{b}_t}{1-\theta(\mathbf{x}(t))}\\
\label{eq:x-down-closed}\dot{\mathbf{x}}(t)&=& \frac{\dot{b}_t}{a_t}\frac{\mathbf{v}(\mathbf{x}(t))}{1-\theta(\mathbf{x}(t))}\\
\label{eq:theta-down-closed}1&>&\theta(\mathbf{x}(t))\ge \|x(t)\|_{\infty},
\end{eqnarray}
where $\mathbf{v}(\mathbf{x}(t))$ is defined in (\ref{eq:v-opt-donw-closed}). We show that a sufficient condition for the above to hold is as follows:
\begin{eqnarray}
\label{eq:theta-down-closed-equi}\theta(\mathbf{x}(t))&=&1-{\frac{a_0}{a_t}}=1-{\frac{\dot{b}_t}{\dot{a}_t}}\\
\nonumber \dot{a}_t&=& \frac{\dot{b}_t}{1-\theta(\mathbf{x}(t))}=\frac{\dot{b}_t}{{\frac{a_0}{a_t}}}\\
\label{eq:x-down-closed-equi}\dot{\mathbf{x}}(t)&=& \frac{\dot{b}_t}{a_t}\frac{\mathbf{v}(\mathbf{x}(t))}{1-\theta(\mathbf{x}(t))}=\frac{\dot{a}_t}{a_t}\mathbf{v}(\mathbf{x}(t)).
\end{eqnarray}
\begin{proof}
From the ode (\ref{eq:x-down-closed-equi}), we have, $\forall i=1,\ldots, n$, 
\[
\dot{x}_i(t)=\frac{\dot{a}_t}{a_t}\mathbf{v}_i(\mathbf{x}(t))\overset{\mathbf{v}_i(\mathbf{x}(t))\le 1-x_i(t)}{\le} \frac{\dot{a}_t}{a_t}(1-x_i(t)), 
\]
where the inequality is from (\ref{eq:v-opt-donw-closed}). From the Gr\"{o}nwall's inequality~\citep{gronwall1919note}, the last ordinary differential inequality is equivalent to 
\begin{equation*}
1-x_i(t)\ge e^{-\int_0^t \frac{\dot{a}_s}{a_s}ds}=e^{-\ln \frac{a_t}{a_0}} =\frac{a_0}{a_t}>0,
\end{equation*}
implying that
\begin{equation*}
 1-\|x(t)\|_{\infty} \ge \frac{a_0}{a_t}>0.
\end{equation*}
Therefore (\ref{eq:theta-down-closed-equi}) implies (\ref{eq:theta-down-closed}).
\end{proof}

Based on these choices, our algorithm is presented below. 
\begin{algorithm}[H]
    \caption{}
    \label{alg:FW-Con-time-down-closed}
    \begin{algorithmic}[1] 
        \State Input: a non-negative non-monotone differentiable DR-submodular function $F$; a solvable down-closed convex set $C$; and $(a, b)\in Q$ satisfying:
        \begin{equation}
        \label{eq:ode_alg_a-t-b-t-down-closed}\dot{a}_t=\frac{\dot{b}_t}{\frac{a_0}{a_t}}\iff \dot{b}_t=a_0\left(\ln a_t\right)^{\prime}\iff b_t-b_0=a_0\ln\frac{a_t}{a_0}
        \end{equation}
        \Procedure{Con-Time-Down-Closed}{$F,C, a, b$} 
            \State $\mathbf{x}(0)=\mathbf{0}\in C$ \Comment{Initialization}
            \For{$t\in [0, T]$} \Comment{$T$ is the stopping time}
             \begin{eqnarray}
\label{eq:ode_alg_e-t-e-v-down-closed} 
 \mathbf{v}(\mathbf{x}(t))&=&\arg\max_{\mathbf{v}\in C, \mathbf{v}\le \mathbf{1}-\mathbf{x}}\langle\nabla F(\mathbf{x}(t)),\mathbf{v}\rangle\\
\label{eq:ode_alg_e-t-e-t-down-closed}
\dot{\mathbf{x}}(t)&=&\frac{\dot{a}_t}{a_t}\mathbf{v}(\mathbf{x}(t))=\left(\ln a_t\right)^{\prime}\mathbf{v}(\mathbf{x}(t))
\end{eqnarray}
\EndFor\label{FW-Confor-time-down-closed}
            \State \textbf{return} $\mathbf{x}(T)$ 
        \EndProcedure
    \end{algorithmic}
\end{algorithm}
This problem was studied in~\cite{feldman2011unified,bian2017continuous}. We can recover their \emph{measured continuous greedy} algorithm by choosing $a_t=e^t, b_t=t, t\in [0, 1]$.

With an algorithm designed under the guidance of the Lyapunov function, we now need to address the feasibility of the final output. From (\ref{eq:ode_alg_e-t-e-t-down-closed}), we obtain that
\[
\mathbf{x}(T)=\int_0^T \mathbf{v}(\mathbf{x}(t))d(\ln a_t)
\]
Note that (\ref{eq:ode_alg_e-t-e-v-down-closed}) implies that $\mathbf{v}(\mathbf{x}(t))\in C, \forall t\in [0, T]$. If we choose $a_t$ such that $\ln a_t$ is a cdf on $[0, T]$, then the right-hand side of last inequality is a feasible solution because it is a convex combination of feasible solutions. So the extra constraint set $P$ demands that $\ln a_t$ is a cdf or equivalently 
\begin{equation}\label{eq:P-example2}
P=\{(a, T): a\in Q, \ln a_0=0, \ln a_T=1, \dot{a}_t>0\},
\end{equation}
where $Q$ is defined in Assumption~\ref{assume:q-b}.

\item [Step 4.] Finally, the approximation ratio of this algorithm is to choose $a_t$, $b_t$ and $T$ to  maximize the approximation ratio in (\ref{eq:appro_ratio}) subject to (\ref{eq:ode_alg_a-t-b-t-down-closed}), $(a, b)\in Q$, and $(a, b, T)\in P$, where $Q$ and $P$ are defined in Assumption~\ref{assume:q-b} and (\ref{eq:P-example2}), respectively.
\begin{eqnarray}
\nonumber &\sup\limits_{a_t, b_t, T}& \frac{b_T-b_0}{a_T}\\
\label{eq:a-b-temp}&\text{s.t.}&b_t-b_0=a_0\ln\frac{a_t}{a_0}\\
&\nonumber & a_0>0, \dot{a}_t>0; b_0\ge 0, \dot{b}_t>0\\
\nonumber&& \ln a_0=0, \ln a_T=1, \dot{a}_t>0
\end{eqnarray}
Or equivalently by removing $b_t$,
\begin{eqnarray*}
\nonumber &\sup\limits_{a_t, T}&\frac{\ln\frac{a_T}{a_0}}{\frac{a_T}{a_0}}\\
\nonumber &\text{s.t.} &a_0>0, \dot{a}_t>0;\\
\nonumber&&\ln a_0=0, \ln a_T=1, \dot{a}_t>0
\end{eqnarray*}

We now show that the optimal approximation ratio is $\frac{1}{e}$ for Algorithm~\ref{alg:FW-Con-time-down-closed}. 
\begin{proof}
First, the last constraint implies that the objective value is equal to
\[
\frac{\ln\frac{a_T}{a_0}}{\frac{a_T}{a_0}}= \frac{1}{e}
\]

Second, choose $a_t=e^t, b_t=\ln a_t=t$ and $T=1$. Then $\ln a_t=t$ is a cdf and the approximation ratio is 
\[
F(\mathbf{x}(1))\ge \frac{b_1-b_0}{a_1}F(\mathbf{x}^*)=\frac{1}{e}F(\mathbf{x}^*)
\]
because $F(\mathbf{x}(0))\ge 0$, as in Assumption~\ref{assum:sub-oracle}(2).

Together, the best approximation ratio of the algorithm is $1/e$.
\end{proof}

\end{description}

\subsection{Exampel 3: Nonnegative non-monotone DR-submodular and solvable convex set}\label{subsec:con-exp3}

\begin{description}
\item [Step 1.] The Lyapunov function is 
\[
E(\mathbf{x}(t))=a_tF(\mathbf{x}(t))-b_tF(\mathbf{x}^*)
\]
Its derivative is 
\[
\dot{E}(\mathbf{x}(t))=\dot{a}_t F(\mathbf{x}(t))+a_t\langle\nabla F(\mathbf{x}(t), \dot{\mathbf{x}}(t) \rangle -\dot{b}_tF(\mathbf{x}^*)
\]

\item [Step 2.] The upper bound $U_t$ follows from (\ref{eq:DR-sub-LB}) by taking $\mathbf{x}=\mathbf{x}(t)$ and $\mathbf{y}:=\mathbf{x}^*$,
\begin{eqnarray*}
\langle \nabla F(\mathbf{x}(t)), \mathbf{x}^*-\mathbf{x}(t)\rangle  &\ge& F(\mathbf{x}(t)\vee \mathbf{x}^*)+F(\mathbf{x}(t)\wedge \mathbf{x}^*)-2F(\mathbf{x}(t))\\
&\ge& (1-\theta(\mathbf{x}(t)))F(\mathbf{x}^*)-2F(\mathbf{x}(t)),
\end{eqnarray*}
where $1>\theta(\mathbf{x}(t))\ge \|\mathbf{x}(t)\|_{\infty}=\max_{i=1}^n x_i(t)$ and the last inequality is from~\cite[Lemma 3.5]{feldman2011unified}. Therefore
\begin{eqnarray}
\nonumber F(\mathbf{x}^*)&\le& \frac{2F(\mathbf{x}(t))+\langle \nabla F(\mathbf{x}(t)), \mathbf{x}^*-\mathbf{x}(t)\rangle }{1-\theta(\mathbf{x}(t))}\\
\nonumber &\le& \max_{\mathbf{v}\in C}\frac{2F(\mathbf{x}(t))+\langle \nabla F(\mathbf{x}(t)), v-\mathbf{x}(t)\rangle }{1-\theta(\mathbf{x}(t))}\\
\label{eq:opt-ub-general}&=&\frac{2F(\mathbf{x}(t))+\langle \nabla F(\mathbf{x}(t)), \mathbf{v}( \mathbf{x}(t)-\mathbf{x}(t)\rangle }{1-\theta(\mathbf{x}(t))},
\end{eqnarray}
where
\begin{equation}\label{eq:v-exp3}
\mathbf{v}(\mathbf{x}(t)):=\arg\max_{\mathbf{v}\in C}\left[\frac{2F(\mathbf{x}(t))+\langle \nabla F(\mathbf{x}(t)), v-\mathbf{x}(t)\rangle }{1-\theta(\mathbf{x}(t))} \right]=\arg\max_{\mathbf{v}\in C}\langle \nabla F(\mathbf{x}(t)), v\rangle
\end{equation}

Note that in this example, $c(\mathbf{x}(t))=\frac{2}{1-\theta(\mathbf{x}(t))}$, $d(t)=\frac{1}{1-\theta(\mathbf{x}(t))}$ and $\mathbf{u}(\mathbf{x}(t))=\mathbf{v}(\mathbf{x}(t))-\mathbf{x}(t)$. We have the following extra constraints collected in 
\begin{equation}\label{eq:O-general}
O:=\left\{c(\mathbf{x}(t)): 0<\frac{2}{c}\le 1-\|x(t)\|_{\infty}, t\in [0, T]\right\}.
\end{equation}

Note further the last optimization has a linear objective function. An efficient oracle exists for this problem by Assumption~\ref{assum:sub-oracle}(3-iii).

Because (\ref{eq:opt-ub-general}) is our desired $U_t$ which involves no quantity surrounding $\mathbf{x}^*$, we obtain 
\begin{eqnarray}
\label{eq:L-fun-derivative-exp3}\dot{E}(t)&=&\left(\dot{a}_t-\frac{2\dot{b}_t}{1-\theta(\mathbf{x}(t))}\right)F(\mathbf{x}(t))+\left\langle\nabla F(\mathbf{x}(t)), a_t\dot{\mathbf{x}}(t)-\frac{\dot{b}_t(\mathbf{v}(\mathbf{x}(t))-\mathbf{x}(t))}{1-\theta(\mathbf{x}(t))}\right\rangle
\end{eqnarray}

\item [Step 3.] Guided by the last quantity (\ref{eq:L-fun-derivative-exp3}) and the condition (\ref{eq:O-general}) imposed in Step 2, we are ready to design an algorithm to guarantee that $\dot{E}(t)\ge 0$. A sufficient condition is as follows:
\begin{eqnarray}
\nonumber\dot{a}_t&\ge& \frac{2\dot{b}_t}{1-\theta(\mathbf{x}(t))}\\
\label{eq:x-general}\dot{\mathbf{x}}(t)&=& \frac{\dot{b}_t}{a_t}\frac{\mathbf{v}(\mathbf{x}(t))-\mathbf{x}(t)}{1-\theta(\mathbf{x}(t))}\\
\label{eq:theta-general}1&>&\theta(\mathbf{x}(t))\ge\|x(t)\|_{\infty},
\end{eqnarray}
where $\mathbf{v}(\mathbf{x}(t))$ is defined in (\ref{eq:v-exp3}). We show that a sufficient condition for the above to hold is as follows:
\begin{eqnarray}
\label{eq:theta-general-equi}\theta(\mathbf{x}(t))&=&1-\sqrt{\frac{a_0}{a_t}}\\
\nonumber\dot{a}_t&=& \frac{2\dot{b}_t}{1-\theta(\mathbf{x}(t))}=\frac{2\dot{b}_t}{\sqrt{\frac{a_0}{a_t}}}\\
\label{eq:x-general-equi}\dot{\mathbf{x}}(t)&=& \frac{\dot{b}_t}{a_t}\frac{\mathbf{v}(\mathbf{x}(t))-\mathbf{x}(t)}{1-\theta(\mathbf{x}(t))}=\frac{\dot{a}_t}{2a_t}(\mathbf{v}(\mathbf{x}(t))-\mathbf{x}(t))
\end{eqnarray}
\begin{proof}
From the ode (\ref{eq:x-general-equi}), we have, $\forall i=1,\ldots, n$, 
\[
\dot{x}_i(t)=\frac{\dot{a}_t}{2a_t}(\mathbf{v}(\mathbf{x}(t))-\mathbf{x}(t))\overset{\mathbf{v}\in C}{\le} \frac{\dot{a}_t}{2a_t}(1-x_i(t)),
\]
where the inequality is from (\ref{eq:v-exp3}). From the Gr\"{o}nwall's inequality~\citep{gronwall1919note}, the last ordinary differential inequality is equivalent to
\begin{equation*}
1-x_i(t)\ge e^{-\int_0^t \frac{\dot{a}_s}{2a_s}ds}=e^{-\ln \sqrt{\frac{a_t}{a_0}} }=\sqrt{\frac{a_0}{a_t}}>0, 
\end{equation*}
implying that
\[
 1-\|x(t)\|_{\infty}\ge \sqrt{\frac{a_0}{a_t}}>0.
\]
Therefore (\ref{eq:theta-general-equi}) implies (\ref{eq:theta-general}).
\end{proof}

Based on these choices, our algorithm is presented below. 
\begin{algorithm}[H]
    \caption{}
    \label{alg:FW-Con-time}
    \begin{algorithmic}[1] 
        \State Input: a non-negative non-monotone differentiable DR-submodular function $F$; a solvable convex set $C$; and $(a, b)\in Q$ satisfying:
        \begin{equation}
        \label{eq:ode_alg_a-t-b-t-general}\dot{a}_t=\frac{2\dot{b}_t}{\sqrt{\frac{a_0}{a_t}}}\iff \dot{b}_t=\sqrt{a_0}\left(\sqrt{a_t}\right)^{\prime}\iff b_t-b_0=\sqrt{a_0}(\sqrt{a_t}-\sqrt{a_0})
        \end{equation}
        \Procedure{FW-Con-Time}{$F,C, a, b$} 
            \State $\mathbf{x}(0)=\mathbf{0}\in C$ \Comment{Initialization}
            \For{$t\in [0, T]$} \Comment{$T$ is the stopping time}
             \begin{eqnarray}
\nonumber \mathbf{v}(\mathbf{x}(t))&=&\arg\max_{\mathbf{v}\in C}\langle\nabla F(\mathbf{x}(t)), \mathbf{v}\rangle\\
\label{eq:ode_alg_e-t-e-t}
\dot{\mathbf{x}}(t)&=&\frac{\dot{a}_t}{2a_t}(\mathbf{v}(\mathbf{x}(t))-\mathbf{x}(t))=\left(\ln \sqrt{a_t}\right)^{\prime}(\mathbf{v}(\mathbf{x}(t))-\mathbf{x}(t)).
\end{eqnarray}
\EndFor\label{FW-Confor-time}
            \State \textbf{return} $\mathbf{x}(T)$ 
        \EndProcedure
    \end{algorithmic}
\end{algorithm}

Note that this is nothing but the continuous-time version of the Frank-Wolfe algorithm. The process we just illustrated shows how Frank-Wolfe type of algorithms arises naturally not only in convex optimization, but also in DR-submodular maximization. (\ref{eq:ode_alg_a-t-b-t-general}) clearly shows that $a_t$ may not equal to $b_t$ in general, in contrast to those Lyapunov function based approaches in convex optimization where $a_t$ always equals to $b_t$ because their main objective is for convergence rate, while ours is for approximation ratio.

This problem was studied in~\cite{durr2021non} and \cite{du2022improved}, with the latter improving the approximation ratio in the former. Algorithm \ref{alg:FW-Con-time} recovers the algorithm in \cite{du2022improved}, while \cite{durr2021non} only considers the discrete-time setting.

With an algorithm designed under the guidance of the Lyapunov function, we now need to address the feasibility of the final output. This was answered affirmatively by \cite{jacimovic1999continuous} who showed that the continuous-time version of the Frank-Wolfe algorithm maintains feasibility at each time $t\in [0, T]$ for any $(a, b)\in Q$. So there is no extra feasibility constraint on $P$.

\item [Step 4.] Finally, the approximation ratio of this algorithm is to choose $a_t$, $b_t$ and $T$ to  maximize the approximation ratio in (\ref{eq:appro_ratio}) subject to (\ref{eq:ode_alg_a-t-b-t-general}) and $(a, b)\in Q$, where $Q$ is defined in Assumption~\ref{assume:q-b}.  
\begin{eqnarray}
\nonumber &\sup\limits_{a_t, b_t, T}& \frac{b_T-b_0}{a_T}\\
\label{eq:a-b-temp}&\text{s.t.}&b_t-b_0=\sqrt{a_0}(\sqrt{a_t}-\sqrt{a_0})\\
&\nonumber & a_0>0, \dot{a}_t>0; b_0\ge 0, \dot{b}_t>0.
\end{eqnarray}
Or equivalently by removing $b_t$,
\begin{eqnarray*}
\nonumber &\sup\limits_{a_t, T}&\sqrt{\frac{a_0}{a_T}}\left(1-\sqrt{\frac{a_0}{a_T}}\right)\\
\nonumber &\text{s.t.} &a_0>0, \dot{a}_t>0.
\end{eqnarray*}

We now show that the optimal approximation ratio is $\frac{1}{4}$ for Algorithm~\ref{alg:FW-Con-time}. 
\begin{proof}
First note that the objective value is bounded above as follows:
\[
\sqrt{\frac{a_0}{a_T}}\left(1-\sqrt{\frac{a_0}{a_T}}\right)\le \frac{1}{4}
\]

Second, there are many different ways to choose $a_t$, $b_t$ and $T$ to achieve 1/4. Below are some examples. Recall Assumption~\ref{assum:sub-oracle}(2), we have $F(\mathbf{x}(0))=F(\mathbf{0})\ge 0$.
\begin{enumerate}[(a)]
\item $a_t=e^t, b_t=e^{t/2}-1$ and $T=2\ln 2\approx 1.386294$. Then the approximation ratio is
\[
F(\mathbf{x}(t))\ge \frac{b_tF(\mathbf{x}^*)+a_0L_0-b_0U_0}{a_1}= \frac{b_t}{a_t}F(\mathbf{x}^*)=\frac{e^{t/2}-1}{e^t}F(\mathbf{x}^*).
\]
The maximum is 1/4, achieved at $t=2\ln 2\approx 1.386294$.

\item $a_t=t+1, b_t=\sqrt{t+1}-1$ and $T=3$. Then the approximation ratio is
\[
F(\mathbf{x}(t))\ge \frac{b_t}{a_t}F(\mathbf{x}^*)=\frac{\sqrt{t+1}-1}{t+1}F(\mathbf{x}^*).
\]
The maximum is 1/4, achieved at $t=3$.

\item $a_t=(t+1)^2, b_t=t$ and $T=1$. Then the approximation ratio is
\[
F(\mathbf{x}(t))\ge \frac{b_t}{a_t}F(\mathbf{x}^*)=\frac{t}{(t+1)^2}F(\mathbf{x}^*).
\]
The maximum is 1/4, achieved at $t=1$.
\end{enumerate}

Together, the best approximation ratio of the algorithm is 1/4.

\end{proof}

Note that if we relax Assumption~\ref{assum:sub-oracle} (1), and initialize the algorithm by an arbitrary feasible solution $\mathbf{x}(0)\in C$, then the approximation ratio becomes
\begin{eqnarray*}
F(\mathbf{x}(T))&\ge & \frac{1}{4}\left(1-\|\mathbf{x}(0)\|_{\infty}\right) F(\mathbf{x}^*).
\end{eqnarray*}
For example, if we choose $\mathbf{x}(0)=\arg\min_{\mathbf{x}\in C}\|\mathbf{x}\|_{\infty}$, then the approximation ratio becomes
\begin{eqnarray*}
F(\mathbf{x}(T))&\ge & \frac{1}{4}\left(1-\min_{\mathbf{x}\in C}\|\mathbf{x}\|_{\infty}\right) F(\mathbf{x}^*).
\end{eqnarray*}

\end{description}

\section{Discretization of the continuous-time algorithm}\label{sec:algo-design-L-function-dis}
In Section~\ref{subsec:algo-design-con-time-poten-fun}, we introduce the concept of potential function which is the discrete-time counterpart of the Lyapunov function. In Section~\ref{subsec:algo-design-dis-time}, we introduce the second phase of the framework by discretizing the continuous-time algorithm to obtain its discrete-time implementation with almost the same approximation ratio along with provable time complexity. In Sections~\ref{sec:sub-max-con-alg-design-via-L-fun-mono-general-examp1}, \ref{sec:sub-max-con-alg-design-via-L-fun-nonmono-down-closed-examp2}, and \ref{subsec:nonmono-examp3}, we illustrate the transformation with almost the same examples as before.

\subsection{Potential function}\label{subsec:algo-design-con-time-poten-fun}
\begin{defi}\label{def:algo-dis} \emph{(Discrete-time algorithm)}
In the discrete-time setting, a (discrete-time) algorithm for problem (\ref{eq:generic_max_prob}) is a sequence of vectors 
\[
\mathbf{x}({t_j})\in \mathbb{R}^n, j=0,1,\ldots, N,
\]
starting from $\mathbf{x}(t_0)\in C$ at times 0 and outputting $\mathbf{x}(t_N)\in C$ at time $t_N$, where $N$ is the total number of iterations during $[0, T]$, and $t_j$ is an increasing function of $j=0, \ldots, N$, indicating the time of $t$ at the start of the $j$-th iteration subject to the following boundary conditions $t_0=0, t_N=T$. \qed
\end{defi}

\begin{defi}\label{def:Lyapunov-fun-dis} \emph{(Discrete-time potential function)} A (discrete-time) potential function $E(\mathbf{x}(t_j)), j=0,\ldots, N$ (the discrete-time counter-part of the Lyapunov function) associated with the algorithm $\mathbf{x}({t_j})$ in Definition \ref{def:algo-dis} has a bounded increment at each step $j$
\[
E(\mathbf{x}(t_{j+1}))-E(\mathbf{x}(t_j))\ge -B_j, j=0, \ldots, N-1,
\]
where $B_j\ge 0$. \qed
\end{defi}

Similar as before, we focus on the following parametric form of the potential function from now on, obtained by sampling the continuous-time Lyapunov function $E(\mathbf{x}(t))$ in (\ref{eq:Lyapunov-function-continuous-time}) at times $t_j, j=0, \ldots, N$:
\begin{equation}\label{eq:potential-fun-dis}
E(\mathbf{x}(t_j))=a_{t_j}F(\mathbf{x}(t_j))-b_{t_j}F(\mathbf{x}^*), j=0,1,\ldots, N.
\end{equation}
Because $E$ has a bounded increment, telescoping over $j=0, \ldots, N-1$ leads to 
\begin{eqnarray*}
-\sum_{j=0}^{N-1}B_j &\le& E(\mathbf{x}(t_N))-E(\mathbf{x}(t_0)) \\
&=&a_{t_N}F(\mathbf{x}(t_N))-a_{t_0}F(\mathbf{x}(t_0))-(b_{t_N}-b_{t_0} )F(\mathbf{x}^*)\\
&\le&a_{t_N}F(\mathbf{x}(t_N))-(b_{t_N}-b_{t_0} )F(\mathbf{x}^*),
\end{eqnarray*}
where the last inequality holds when $F$ is nonnegative, as in Assumption~\ref{assum:sub-oracle}(3). So the approximation ratio is 
\begin{eqnarray}\label{eq:appro_ratio-dis}
F(\mathbf{x}(t_N))&\ge&\frac{b_{t_N}-b_{t_0}}{a_{t_N}}F(\mathbf{x}^*)-\frac{\sum_{j=0}^{N-1}B_j}{a_{t_N}},
\end{eqnarray}

The first term in the last quantity dictates the approximation ratio, while the second term will be used to decide the time complexity of the algorithm.

\subsection{Algorithm design: second phase}\label{subsec:algo-design-dis-time}
In the second phase of our framework, we leverage the first-phase results developed in the continuous-time setting in Section~\ref{subsec:algo-design-con-time} to construct a discrete-time implementation with almost the same approximation ratio along with provable time complexity. 

Before we formally present the second phase, we explain the major considerations and rationals which lead to our design of the second phase.  

One major purpose of the first-phase is to pin down the two functions $a_t$, $b_t$ and the stopping time $T$.  With $a_t$, $b_t$ and $T$ known from the first phase, the discrete-time algorithm samples $N$ times during $[0,T]$:
\[
0=t_0<\ldots<t_N=T,
\]
where each time $t_j$ indicates the time of $t$ at the start of the $j$-th iteration of the algorithm. 

These starting times give us two known increasing sequences $a_{t_j}, b_{t_j}, j=0, \ldots, N$ under Assumption~\ref{assume:q-b}. The corresponding potential function $E(\mathbf{x}(t_j))$ takes the parametric form in (\ref{eq:potential-fun-dis}):
\[
E(\mathbf{x}(t_j))=a_{t_j}F(\mathbf{x}(t_j))-b_{t_j}F(\mathbf{x}^*), j=0,1,\ldots, N.
\]

Due to the discretization errors, we cannot expect that $E(\mathbf{x}(t_j))$ is a non-decreasing function of $j$ as in the continuous-time setting and even belong to $x(t)$ at all. However, Definition~\ref{def:Lyapunov-fun-dis} of the potential function offers us some leeway in $B_j$; namely, if we could show that $E(\mathbf{x}(t_j))$ has a bounded increment at each step, then we can derive an almost identical approximation ratio as in (\ref{eq:appro_ratio-dis}). 

There are two types of discretization errors that may arise from the continuous-time to the discrete-time. The first type comes from the differential operation. While in the continuous-time setting,  we take the first-order derivative of the Lyapunov function to show its monotonically, we need to replace the differential operation with the difference operation in the discrete-time. The second type comes from the discretization of the continuous-time algorithm (\ref{eq:appro-ratio})-(\ref{eq:appro-O}). We now address on how to handle both errors.

To account for the first type of error, we need to establish a connection between function values and their gradients. One standard assumption in studying first-order methods is $L$-smoothness. 
\begin{assumption}\label{assum:L-smmoth}
$F:C\mapsto \mathbb{R}^{+}$ is $L$-smooth: $\forall \mathbf{x}, \mathbf{y}\in C$, 
\[
\|\nabla F(\mathbf{y})-\nabla F(\mathbf{x})\|_2 \leq L\|\mathbf{y}-\mathbf{x}\|_2
\]
\[
\bigg\Downarrow
\]
\begin{eqnarray*}
F(\mathbf{x})+\langle\nabla F(\mathbf{x}), \mathbf{y}-\mathbf{x}\rangle- \frac{L}{2}\|\mathbf{y}-\mathbf{x}\|^{2} \le F(\mathbf{y})\le F(\mathbf{x})+\langle\nabla F(\mathbf{x}), \mathbf{y}-\mathbf{x}\rangle+ \frac{L}{2}\|\mathbf{y}-\mathbf{x}\|^{2}
\end{eqnarray*}\qed
\end{assumption}

The $L$-smoothness above is widely assumed in first-order methods in convex optimization~\citep{nemirovskij1983problem} and in DR-submodular maximization~\citep{bian2020continuous}. For example, the multilinear extension $f^M$ of any submodular set function $\{0,1\}^n\ni x\mapsto f(x) \mathbb{R}_{+}$ is DR-submodular and $L$-smooth~\citep[Lemma 2.3.7.]{feldman2013maximization} with 
\begin{equation}\label{eq:mle-L-smooth}
L=O\left(n^2\right)f(\mathbf{x}^*)=O\left(n^2\right)f^M(\mathbf{x}^*).
\end{equation}

Under the $L$-smoothness assumption and the discrete counterpart of the upper bound $U_{t}$ in (\ref{eq:U_t-form}), the increment of the potential function $E(\mathbf{x}(t_j))$ has the following bound at each iteration $j\in \{0,\ldots, N-1\}$:
\begin{eqnarray}
\nonumber &&E(\mathbf{x}(t_{j+1}))-E(\mathbf{x}(t_j))\\
\nonumber&\ge& (a_{t_{j+1}}-a_{t_j}-c(\mathbf{x}(t_j))(b_{t_{j+1}}-b_{t_j})) F(\mathbf{x}(t_j))\\
\label{eq:alg-design-dis}&&+\langle \nabla F(\mathbf{x}(t_j)), a_{t_{j+1}}(\mathbf{x}(t_{j+1})-\mathbf{x}(t_j))- (b_{t_{j+1}}-b_{t_j}) d(t_j)\mathbf{u}(\mathbf{x}(t_j))\rangle-B_j
\end{eqnarray}
where $c$, $d$, and $\mathbf{u}$ are defined in (\ref{eq:U_t-form}), and 
\begin{eqnarray}
\label{eq:B_j} B_j&:=&a_{t_{j+1}}\frac{L}{2}\|\mathbf{x}(t_{j+1})-\mathbf{x}(t_j)\|^2.
\end{eqnarray} 

For the second type of error, recall the continuous-time algorithm (\ref{eq:appro-ratio})-(\ref{eq:appro-O}), whose discrete counterpart is as follows:
\begin{eqnarray}
\label{eq:appro-ratio-dis-design}a_{t_{j+1}}-a_{t_j}&\ge& c(\mathbf{x}(t_j))(b_{t_{j+1}}-b_{t_j})) , j=0, \ldots, N-1\\
\label{eq:appro-x-dis-design}\mathbf{x}(t_{j+1})-\mathbf{x}(t_j)&=&\frac{b_{t_{j+1}}-b_{t_j}}{a_{t_{j+1}}}d(t_j)\mathbf{u}(\mathbf{x}(t_j)), j=0, \ldots, N-1\\
\label{eq:appro-o-dis-design}c&\in& O,
\end{eqnarray}

However this system may be void if we use the two functions $a_t$ and $b_t$ identified from the continuous-time setting. In (\ref{eq:alg-design-dis}), the sign of the gradient associated with the second equality (\ref{eq:appro-x-dis-design}) of the system above is unknown whenever the objective function is non-monotone. Therefore we normally need to retain this equality, leading to possible violation of the first inequality (\ref{eq:appro-ratio-dis-design}) of the system above and hence introducing the aforementioned second type of error, which can be quantified as $G_j^{+}$, where 
\begin{equation}\label{eq:G-j-dis-design}
G_j=c(\mathbf{x}(t_j))(b_{t_{j+1}}-b_{t_j}))-(a_{t_{j+1}}-a_{t_j}).
\end{equation}

Fortunately, in (\ref{eq:alg-design-dis}), this error is associated with the objective value, whose sign is known if the objective function is non-negative, in which case, we are able to control this error as explained shortly in Step 6. 

Now we are well prepared for presenting a two-step process, which is the second phase of our framework. We continue the numbering from the first phase.
\begin{description}
\item [Step 5. (Discretize the continuous-time algorithm)] With $a_t$, $b_t$ and $T$ known from the first phase, the discrete-time algorithm samples $N$ times during $[0,T]$: $0=t_0<\ldots<t_N=T$, with the following updating rule at the $j$-th iteration: $j=0, \ldots, N-1$,
\begin{eqnarray}
\label{eq:appro-ratio-dis}a_{t_{j+1}}-a_{t_j}&=& c(\mathbf{x}(t_j))(b_{t_{j+1}}-b_{t_j}))-G_j , \\
\label{eq:appro-alg-dis}\mathbf{x}(t_{j+1})-\mathbf{x}(t_j)&=&\frac{b_{t_{j+1}}-b_{t_j}}{a_{t_{j+1}}}d(t_j)\mathbf{u}(\mathbf{x}(t_j)),\\
\label{eq:appro-O-dis} c&\in& O,
\end{eqnarray}
where $G_j$, defined in (\ref{eq:G-j-dis-design}), is usually an amount that becomes controllably smaller as the number of steps $N$ become larger. 
\item[Step 6. (Analyze the approximation ratio and the time complexity)]
Noting that any feasible solution provide a lower bound on the optimal value: $F(x(t_j))\le F(\mathbf{x}^*)$,  and assuming that (\ref{eq:appro-ratio-dis})-(\ref{eq:appro-O-dis}) retain feasibility at all iterations $j=0,\ldots, N-1$, then (\ref{eq:alg-design-dis}) becomes
\begin{eqnarray}
\label{eq:alg-design-dis-G}
E(\mathbf{x}(t_{j+1}))-E(\mathbf{x}(t_j))\ge -G_j^{+}F(\mathbf{x}(t_j))-B_j\ge -G_j^{+}F(\mathbf{x}^*)-B_j.
\end{eqnarray}

From (\ref{eq:appro-alg-dis}), we can bound the $B_j$ in (\ref{eq:B_j}) further: 
\begin{eqnarray}
\nonumber B_j&=&a_{t_{j+1}}\frac{L}{2}\|\mathbf{x}(t_{j+1})-\mathbf{x}(t_j)\|^2=\frac{L}{2}\frac{\left(b_{t_{j+1}}-b_{t_j}\right)^2}{a_{t_{j+1}}}d(t_j)^2\underbrace{\left\|\mathbf{u}(\mathbf{x}(t_j))\right\|^2}_{(\ref{eqn:diamter})}\\
\label{eq:alg-design-dis-B}&\le&\frac{DL}{2}\frac{\left(b_{t_{j+1}}-b_{t_j}\right)^2}{a_{t_{j+1}}}d(t_j)^2
\end{eqnarray}

By using the $B_j$ in (\ref{eq:alg-design-dis-B}) and telescoping the left-hand side of the inequality (\ref{eq:alg-design-dis-G}) over $j=0, \ldots, N-1$, we obtain
\begin{eqnarray*}
\nonumber E(\mathbf{x}(t_N))-E(\mathbf{x}(t_0))&\ge& -F(\mathbf{x}^*) \sum_{j=0}^{N-1}G_j^{+} -\frac{L}{2}\sum_{j=0}^{N-1} \frac{\left(b_{t_{j+1}}-b_{t_j}\right)^2d(t_j)^2}{a_{t_{j+1}}},
\end{eqnarray*}
implying that the performance of the algorithm in (\ref{eq:appro-ratio-dis})-(\ref{eq:appro-O-dis}) is as follows:
\begin{eqnarray}
\nonumber F(\mathbf{x}(t_N))&\ge& \frac{b_{t_N}-b_{t_0}}{a_{t_N}}F(\mathbf{x}^*) + \frac{a_{t_0}}{a_{t_N}}\underbrace{F(\mathbf{x}(t_0))}_{\ge 0}-\frac{\sum_{j=0}^{N-1}G_j^{+}}{a_{t_N}}F(x^*)-\frac{L}{2a_{t_N}}\sum_{j=0}^{N-1} \frac{\left(b_{t_{j+1}}-b_{t_j}\right)^2\left\|\mathbf{u}(\mathbf{x}(t_j))\right\|^2}{a_{t_{j+1}}}\\
\label{eq:con-2-dis}&\ge &\left(\frac{b_{t_N}-b_{t_0}-\sum_{j=0}^{N-1}G_j^{+}}{a_{t_N}}\right)F(\mathbf{x}^*)-\frac{L}{2a_{t_N}}\sum_{j=0}^{N-1} \frac{\left(b_{t_{j+1}}-b_{t_j}\right)^2d(t_j)^2}{a_{t_{j+1}}}
\end{eqnarray}

Note that the first term in the last inequality dictates the approximation ratio, while the second term therein decides the time complexity. 

With $a_t$, $b_t$ and $T$ known from the continuous-time algorithm, the discrete-time implementation has almost the same approximation ratio as specified in the first term in (\ref{eq:con-2-dis}). To obtain the number of iterations $N$ (and hence the time complexity), we choose $N$ and $t_j, j=0,\ldots, N$ to solve a bi-criteria optimization problem: maximize the first term in the right-hand side of (\ref{eq:con-2-dis}), and minimize the second term therein subject to $0=t_0\le\ldots\le t_N=T$, or equivalently,
\begin{eqnarray}
\label{eq:time-comlexity-dis1}&\min\limits_{t_0,\ldots, t_n, N}&\left\{\sum_{j=0}^{N-1}G_j^{+},\sum_{j=0}^{N-1} \frac{\left(b_{t_{j+1}}-b_{t_j}\right)^2d(t_j)^2}{a_{t_{j+1}}} \right\}\\
\label{eq:time-comlexity-dis2}&\text{s.t.} & 0=t_0\le\ldots\le t_N=T
\end{eqnarray}
Our purpose is to estimate the order of the time complexity in the second criteria, while controlling the error of approximation in the first criteria. So we can solve (\ref{eq:time-comlexity-dis1})-(\ref{eq:time-comlexity-dis2}) numerically to help us arriving at a numerically-aided guess on the solution.\qed
\end{description}

Before we illustrate the applications of the formula (\ref{eq:con-2-dis}) in Sections~\ref{subsec:mono-exp1}, \ref{subsec:mono-exp2} and \ref{subsec:con-exp3}, as comparison, we discuss a direct method to design discrete-time algorithm via potential function in the next section. 

\subsubsection{Direct method}\label{eq:subsubsec:direct-method}
An alternate way for designing a discrete-time algorithm is to repeat the four-step process in the first phase directly in the discrete-time domain by replacing the Lyapunov function with the corresponding discretized potential function. 

In the direct method, $a_t,b_t$ and $T$ are unknown. Therefore, the system (\ref{eq:appro-ratio-dis-design})-(\ref{eq:appro-o-dis-design}) may be feasible by carefully choose these quantities. However, allowing $G_j\le 0$ includes this case. The direct method can still use the algorithm designed in (\ref{eq:appro-ratio-dis})-(\ref{eq:appro-O-dis}).

Then at the end, we need to choose $a_{t_j}$, $b_{t_j}$, $T$, $N$, and $t_j$, $j=0,\ldots, N$ to solve a bi-criteria optimization problem: maximize the first term in the right-hand side of (\ref{eq:con-2-dis}), and minimize the second term therein subject to (\ref{eq:appro-ratio-dis})-(\ref{eq:appro-O-dis}), the discrete counterparts $Q$ and $\tilde{P}$ of the constraints of $Q$ and $P$, respectively, and $0=t_0\le\ldots\le t_N=T$:
\begin{eqnarray*}
&\max_{a, b, T, N}& \left\{\frac{b_{t_N}-b_{t_0}-\sum_{j=0}^{N-1}G_j^{+}}{a_{t_N}}, -\sum_{j=0}^{N-1}\frac{\left(b_{t_{j+1}}-b_{t_j}\right)^2d(t_j)^2}{a_{t_{j+1}}}\right\}\\
&\text{s.t.}&(a, b)\in \tilde{Q}\\
&& (a, b, T)\in \tilde{P}\\
& &0=t_0\le\ldots\le t_N=T.
\end{eqnarray*}
This is a nonlinear bi-criteria program. We now establish a connection between this direct method (a simultaneous approach) and the two-phased method (a sequential approach). The approximation ratio usually is the dominant priority. So we solve this bi-criteria problem sequentially via a two-phase approach: first maximizing the first criteria, and then minimizing the second criteria. Our proposed two-phase method is the same except that we solve a continuous-time version of the first phase problem, which is usually easier than its discrete-counterpart. 

However, the direct approach, although usually harder to analyze, offers at least two advantages because we no longer restrict ourselves to the $a_t$ and $b_t$ used in the continuous-time setting. The first one is that we may be able to choose $a_t$ and $b_t$ to remove the extra loss of the term $\sum_{j=0}^{N-1}G_j^{+}$, compared to the two-phase method. The second one is that we may be able to choose $a_t$ and $b_t$ to fine-tune the balance between the approximation ratio and the complexity.


\subsection{Example 1: Nonnegative monotone submodular maximizaiton subject to a general compact convex set}\label{sec:sub-max-con-alg-design-via-L-fun-mono-general-examp1}
For illustration purpose, in the following examples, we closely follow the two-step process as prescribed earlier in Section~\ref{subsec:algo-design-dis-time}. We will choose equal step sizes in all examples, while leaving open the question on whether adpative step sizes can improve the time complexity.

\begin{description}
\item [Step 5.] From the continuous-time problem in Section~\ref{subsec:mono-exp1}, we have  
\[
T=1, a_{t_j}=b_{t_j}=e^{t_j}, j=0,\ldots, N.
\]
The algorithm (\ref{eq:appro-ratio-dis})-(\ref{eq:appro-O-dis}) becomes the following in this example: $\forall j=0, \ldots, N-1$, 
\begin{eqnarray}
\label{eq:appro-ratio-dis-exp1}a_{t_{j+1}}-a_{t_j}&= & b_{t_{j+1}}-b_{t_j}-G_j\\
\label{eq:appro-alg-dis-exp1}\mathbf{x}(t_{j+1})-\mathbf{x}(t_j)&=&\frac{b_{t_{j+1}}-b_{t_j}}{a_{t_{j+1}}}\mathbf{v}(\mathbf{x}(t_j)),
\end{eqnarray}
where
\[
\mathbf{v}(\mathbf{x}(t_j))=\arg\max_{\mathbf{v}\in C}\langle\nabla F(\mathbf{x}(t_j)), \mathbf{v}\rangle.
\]
Note that there is no constraint on the set $Q$, $c(\mathbf{x}(t_j))=1$, $d(t_j)=1$, and $\mathbf{u}(\mathbf{x}(t_j))=\mathbf{v}(\mathbf{x}(t_j))$.

Note first that 
\begin{eqnarray}\label{eq:exp1-G-j}
G_j&=&c(\mathbf{x}(t_j))(b_{t_{j+1}}-b_{t_j})-(a_{t_{j+1}}-a_{t_j})= (b_{t_{j+1}}-b_{t_j})-(a_{t_{j+1}}-a_{t_j}) =0,
\end{eqnarray}
Note further, from $\mathbf{v}(\mathbf{x}(t_j))\in C$, that
\begin{eqnarray}\label{eq:exp1-u}
\left\|\mathbf{u}(\mathbf{x}(t_j))\right\|^2\le D,
\end{eqnarray}
where $D=\max_{\mathbf{x},\mathbf{y}\in C}\|\mathbf{x}-\mathbf{y}\|^2$ is the diameter of $C$.

\item [Step 6.] Choose 
\[
t_j=\frac{j}{N}, j=0,\ldots, N.
\]

Because $G_j^{+}=0$ from (\ref{eq:exp1-G-j}), there is no need to retain feasibility in the algorithm except in the last step because (\ref{eq:con-2-dis}) never invokes $F(x(t_j))\le F(x^*)$. Noting further (\ref{eq:exp1-u}), the approximation ratio and the time complexity in (\ref{eq:con-2-dis}) are as follows:.
\begin{eqnarray}
\nonumber F(\mathbf{x}(t_N))&\ge &\frac{b_{t_N}-b_{t_0}}{a_{t_N}}F(\mathbf{x}^*)-\frac{DL}{2a_{t_N}}\sum_{j=0}^{N-1} \frac{\left(b_{t_{j+1}}-b_{t_j}\right)^2}{a_{t_{j+1}}}\\
\nonumber&=& \left(1-\frac{1}{e}\right)F(\mathbf{x}^*)-\frac{DL}{2a_{t_N}}\sum_{j=0}^{N-1} \frac{\left(e^{t_{j+1}}-e^{t_j}\right)^2}{e^{t_{j+1}}}\\
\nonumber &= &\left(1-\frac{1}{e}\right)F(\mathbf{x}^*)-\frac{DL}{2a_{t_N}}\sum_{j=0}^{N-1} 
e^{t_{j+1}}\left(1-e^{-t_{j+1}+t_j}\right)^2\\
\nonumber&=&\left(1-\frac{1}{e}\right)F(\mathbf{x}^*)-\frac{DL}{2a_{t_N}}\left(1-e^{-\frac{1}{N}}\right)^2\sum_{j=0}^{N-1}e^{\frac{j+1}{N}}\\
\nonumber&\ge &\left(1-\frac{1}{e}\right)F(\mathbf{x}^*)-\frac{DL}{2a_{t_N}}\frac{1}{N^2}(e-1)(N+1)\\
\label{eq:apprx-ratio-example1}&=&\left(1-\frac{1}{e}\right)F(\mathbf{x}^*)-O\left(\frac{DL}{N}\right),
\end{eqnarray}
where the second inequality is from $1-e^{-x}\le x$ with $x=\frac{1}{N}$, $\sum_{j=0}^{N-1}e^{\frac{j+1}{N}}=(e-1)\left(1+\frac{1}{e^{\frac{1}{N}}-1}\right)\le (e-1)(N+1)$, the last equality is from $\sum_{j=0}^{N-2}\frac{1}{N-j-1}=O(\ln N)$, and the last inequality is from  $\ln x\le \sqrt{x}, \forall x\ge 0$.

Therefore from (\ref{eq:apprx-ratio-example1}), we have
\begin{eqnarray*}
F(\mathbf{x}(t_N))&\ge & \left(1-\frac{1}{e}\right)F(\mathbf{x}^*)-\varepsilon,
\end{eqnarray*}
after $O\left(\frac{DL}{\varepsilon}\right)$ iterations. 

As a corollary, if $F$ is the multilinear extension of a submodular set function, then plugging the $L$ in (\ref{eq:mle-L-smooth}) into (\ref{eq:apprx-ratio-example1}) and noting that $D=O(n)$, we obtain 
\[
F(\mathbf{x}(t_N))\ge \left(1-\frac{1}{e}-O\left(\frac{n^3}{N}\right)\right)F(\mathbf{x}^*).
\]
Consequently, 
\[
F(\mathbf{x}(t_N))\ge \left(1-\frac{1}{e}-\varepsilon\right)F(\mathbf{x}^*)
\]
after $O\left(\frac{n^3}{\varepsilon}\right)$ iterations.

\end{description}

\subsection{Example 2: Nonnegative non-monotone submodular maximizaiton subject  to a down-closed compact convex set}\label{sec:sub-max-con-alg-design-via-L-fun-nonmono-down-closed-examp2}


\begin{description}
\item [Step 5.] From the continuous-time problem in Section~\ref{subsec:mono-exp2}, we have  
\[
T=1, a_{t_j}=e^{t_j}, b_{t_j}=t_j, j=0,\ldots, N,
\]
The algorithm (\ref{eq:appro-ratio-dis})-(\ref{eq:appro-O-dis}) becomes the following in this example: $\forall j=0, \ldots, N-1$, 
\begin{eqnarray}
\label{eq:appro-ratio-dis-exp2}a_{t_{j+1}}-a_{t_j}&= & \frac{b_{t_{j+1}}-b_{t_j}}{1-\theta(\mathbf{x}(t_j))}-G_j\\
\label{eq:appro-alg-dis-exp2}\mathbf{x}(t_{j+1})-\mathbf{x}(t_j)&=&\frac{b_{t_{j+1}}-b_{t_j}}{a_{t_{j+1}}(1-\theta(\mathbf{x}(t_j)))}\mathbf{v}(\mathbf{x}(t_j))\\
\label{eq:appro-O-dis-exp2} 1&>&\theta(\mathbf{x}(t_j))\ge \|\mathbf{x}(t_j)\|_{\infty},
\end{eqnarray}
where
\[
 \mathbf{v}(\mathbf{x}(t_j))=\arg\max_{\mathbf{v}\in C, \mathbf{v}\le \mathbf{1}-\mathbf{x}(t_j)}\langle\nabla F(\mathbf{x}(t_j)),\mathbf{v}\rangle.
\]
Note that the last constraint is due to the set $Q$, $c(\mathbf{x}(t_j))=\frac{1}{1-\theta(\mathbf{x}(t_j))}$, $d(t_j))=\frac{1}{1-\theta(\mathbf{x}(t_j))}$ and $\mathbf{u}(\mathbf{x}(t_j))=\mathbf{v}(\mathbf{x}(t_j))$.

We now show that choosing 
\begin{equation}\label{eq:exp2-theta}
1-\theta(\mathbf{x}(t_j))=\frac{1}{c(\mathbf{x}(t_j))}=\frac{1}{a_{t_{j}}},
\end{equation}
leads to $G_j\le 0$ and satisfies the last inequality. 
\begin{proof} First we compute $G_j$ from (\ref{eq:appro-ratio-dis-exp2}):
\begin{eqnarray}
\nonumber G_j&=&\frac{b_{t_{j+1}}-b_{t_j}}{1-\theta(\mathbf{x}(t_j))}-(a_{t_{j+1}}-a_{t_j})\\
\nonumber &=&a_{t_j}(b_{t_{j+1}}-b_{t_j})-(a_{t_{j+1}}-a_{t_j})\\
\label{eq:con-2-dis-exp2}&=&e^{t_j}\left(1+t_{j+1}-t_j- e^{t_{j+1}-t_j}\right)\le 0,
\end{eqnarray}
where the last equality follows from $1+x\le e^x$ with $x=t_{j+1}-t_j$.

From the last inequality, (\ref{eq:appro-alg-dis-exp2}) has an upper bound: $\forall i=1,\ldots, n$, $j=0,\ldots, N-1$, 
\[
\mathbf{x}(t_{j+1})-\mathbf{x}(t_j)=\frac{b_{t_{j+1}}-b_{t_j}}{a_{t_{j+1}}(1-\theta(\mathbf{x}(t_j)))}\mathbf{v}(\mathbf{x}(t_j))\overset{(\ref{eq:con-2-dis-exp2})}{\le} \frac{a_{t_{j+1}}-a_{t_j}}{a_{t_{j+1}}}\mathbf{v}(\mathbf{x}(t_j)),
\]
implying that 
\begin{eqnarray*}
(1-\mathbf{x}_i(t_{j+1}))-(1-\mathbf{x}_i(t_j))&\ge&-\frac{a_{t_{j+1}}-a_{t_j}}{a_{t_{j+1}}}\mathbf{v}_i(\mathbf{x}(t_j))\overset{\mathbf{v}_i\le 1-x_i(t_j)}{\ge} -\frac{a_{t_{j+1}}-a_{t_j}}{a_{t_{j+1}}}(1-x_i(t_j))
\end{eqnarray*}
Hence
\begin{eqnarray*}
1-\mathbf{x}_i(t_{j+1})&\ge & \frac{a_{t_{j}}}{a_{t_{j+1}}}  (1-x_i(t_{j}))\ge \frac{a_{t_{0}}}{a_{t_{j+1}}}=\frac{1}{a_{t_{j+1}}}.
\end{eqnarray*} 
Therefore, 
\begin{equation*}
 1-\|x(t_j)\|_{\infty} \ge \frac{1}{a_{t_j}}=1-\theta(\mathbf{x}(t_j)).
\end{equation*}
So (\ref{eq:appro-O-dis-exp2}) is satisfied by the choice of the $1-\theta(\mathbf{x}(t_j))$
\end{proof}

With (\ref{eq:exp2-theta}) and  $\mathbf{v}(\mathbf{x}(t_j))\in C$, we have
\begin{eqnarray}\label{eq:exp2-u}
d(t_j)^2\left\|\mathbf{u}(\mathbf{x}(t_j))\right\|^2 =d(t_j)^2\left\|\frac{\mathbf{v}(\mathbf{x}(t_j))}{1-\theta(\mathbf{x}(t_j))}\right\|^2=a_{t_{j}}^2\left\|\mathbf{v}(\mathbf{x}(t_j))\right\|^2 \le Da_{t_{j}}^2.
\end{eqnarray}


\item [Step 6.] Choose the step size 
\[
t_j=\frac{j}{N}, j=0,\ldots, N.
\]
Noting that $G_j^{+}=0$ from (\ref{eq:con-2-dis-exp2}), and (\ref{eq:exp2-u}), the approximation ratio and the time complexity in (\ref{eq:con-2-dis}) are as follows:
\begin{eqnarray}
\nonumber F(\mathbf{x}(t_N))&\ge &\frac{b_{t_N}-b_{t_0}}{a_{t_N}}F(\mathbf{x}^*)-\frac{DL}{2a_{t_N}}\sum_{j=0}^{N-1} \frac{\left(b_{t_{j+1}}-b_{t_j}\right)^2a_{t_{j}}^2}{a_{t_{j+1}}}\\
\nonumber&=& \frac{1}{e} F(\mathbf{x}^*)-\frac{DL}{2a_{t_N}}\sum_{j=0}^{N-1} \frac{\left(t_{j+1}-t_j\right)^2e^{2t_j}}{e^{t_{j+1}}}\\
\nonumber&=& \frac{1}{e}F(\mathbf{x}^*)-\frac{DL}{2a_{t_N}N^2}\sum_{j=0}^{N-1}e^{\frac{j-1}{N}} \\
\nonumber&\ge & \frac{1}{e}F(\mathbf{x}^*)-\frac{DL}{2a_{t_N}N^2}\frac{(e-1)N^2}{N+1}\\
\label{eq:apprx-ratio-example2}&=& \frac{1}{e}F(\mathbf{x}^*)-O\left(\frac{DL}{N}\right),
\end{eqnarray}
where the second inequality is from $\sum_{j=0}^{N-1}e^{\frac{j-1}{N}}=\frac{e-1}{e^{\frac{1}{N}}\left(e^{\frac{1}{N}}-1\right)}\le \frac{(e-1)N^2}{N+1}$, the last equality is from $\sum_{j=0}^{N-1}\frac{1}{N-j+1} =O(\ln N)$, and the last inequality is from  $\ln x\le \sqrt{x}, \forall x\ge 0$. Consequently,
\[
F(x(t_N))\ge \frac{1}{e}F(x^*)-\varepsilon
\]
after $T=O\left(\frac{DL}{{\varepsilon}}\right)$ iterations.

As a corollary, if $F$ is the multilinear extension of a submodular set function, then plugging the $L$ in (\ref{eq:mle-L-smooth}) into (\ref{eq:apprx-ratio-example2}) and noting that $D=O(n)$, we obtain
\[
F(x(t_N))\ge \left(\frac{1}{e}-O\left(\frac{n^3}{N}\right)\right)F(x^*).
\]
Consequently,
\[
F(x(t_N))\ge \left(\frac{1}{e}-\varepsilon\right)F(x^*)
\]
 after $T=O\left(\frac{n^3}{{\varepsilon}}\right)$ iterations.

\end{description}

\subsection{Example 3: Nonnegative non-monotone DR-submodular and solvable convex set}\label{subsec:nonmono-examp3}

\begin{description}
\item [Step 5.] From the continuous-time problem in Section~\ref{subsec:con-exp3}, we have  
\[
T=1, a_{t_j}=(1+t_j)^2, b_{t_j}=t_j, j=0,\ldots, N.
\]

The algorithm (\ref{eq:appro-ratio-dis})-(\ref{eq:appro-O-dis}) becomes the following in this example: $\forall j=0, \ldots, N-1$, 
\begin{eqnarray}
\label{eq:appro-ratio-dis-exp3}a_{t_{j+1}}-a_{t_j}&= & \frac{2(b_{t_{j+1}}-b_{t_j})}{1-\theta(\mathbf{x}(t_j))}-G_j\\
\label{eq:appro-alg-dis-exp3}\mathbf{x}(t_{j+1})-\mathbf{x}(t_j)&=&\frac{b_{t_{j+1}}-b_{t_j}}{a_{t_{j+1}}(1-\theta(\mathbf{x}(t_j)))}(\mathbf{v}(\mathbf{x}(t_j))-\mathbf{x}(t_j))\\
\label{eq:appro-O-dis-exp3} 1&>&\theta(\mathbf{x}(t_j))\ge \|\mathbf{x}(t_j)\|_{\infty},
\end{eqnarray}
where
\[
 \mathbf{v}(\mathbf{x}(t_j))=\arg\max_{\mathbf{v}\in C}\langle\nabla F(\mathbf{x}(t_j)), \mathbf{v}\rangle.
\]
Note that the last constraint is due to the set $Q$, $c(\mathbf{x}(t_j))=\frac{2}{1-\theta(\mathbf{x}(t_j))}$, $d(t_j))=\frac{1}{1-\theta(\mathbf{x}(t_j))}$ and $\mathbf{u}(\mathbf{x}(t_j))=\mathbf{v}(\mathbf{x}(t_j))-\mathbf{x}(t_j)$.

We now show that choosing 
\begin{equation}\label{eq:exp3-theta}
1-\theta(\mathbf{x}(t_j))=\frac{2}{c(\mathbf{x}(t_j))}=\frac{1}{\sqrt{a_{t_{j}}}},
\end{equation}
leads to $G_j\le 0$ and satisfies the last inequality. 
\begin{proof} First we compute $G_j$ from (\ref{eq:appro-ratio-dis-exp3}):
\begin{eqnarray}
\nonumber G_j&=&\frac{2(b_{t_{j+1}}-b_{t_j})}{1-\theta(\mathbf{x}(t_j))}-\left(a_{t_{j+1}}-a_{t_j}\right)\\
\nonumber &=&2\sqrt{a_{t_j}}(b_{t_{j+1}}-b_{t_j})-\left(a_{t_{j+1}}-a_{t_j}\right)\\
\nonumber &=&2\sqrt{a_{t_j}}\left(\sqrt{a_{t_{j+1}}}-\sqrt{a_{t_j}}\right)-\left(a_{t_{j+1}}-a_{t_j}\right)\\
\label{eq:con-2-dis-exp3}&=&-\left(\sqrt{a_{t_{j+1}}}-\sqrt{a_{t_j}}\right)^2\le 0.
\end{eqnarray}

Next, from (\ref{eq:appro-alg-dis-exp3}), we have
\begin{eqnarray*}
(1-\mathbf{x}_i(t_{j+1}))-(1-\mathbf{x}_i(t_j))&\ge&-\frac{\sqrt{a_{t_j}} \left(\sqrt{a_{t_{j+1}}}-\sqrt{a_{t_j}}\right)}{a_{t_{j+1}}}(\mathbf{v}(\mathbf{x}(t_j))-\mathbf{x}(t_j))\\
&\overset{\mathbf{v}_i\le 1-x_i(t_j)}{\ge}& -\frac{\sqrt{a_{t_j}} \left(\sqrt{a_{t_{j+1}}}-\sqrt{a_{t_j}}\right)}{a_{t_{j+1}}}(1-x_i(t_j))
\end{eqnarray*}
Hence
\begin{eqnarray*}
1-\mathbf{x}_i(t_{j+1})&\ge &\left(1-\sqrt{\frac{a_{t_j}}{a_{t_{j+1}}}}+\frac{a_{t_j}}{a_{t_{j+1}}} \right) (1-x_i(t_{j+1}))\\
&\ge &\sqrt{\frac{a_{t_j}}{a_{t_{j+1}}}} (1-x_i(t_{j}))\ge \frac{a_{t_{0}}}{\sqrt{a_{t_{j+1}}}}=\frac{1}{\sqrt{a_{t_{j+1}}}}.
\end{eqnarray*} 
Therefore, 
\begin{equation*}
 1-\|x(t_j)\|_{\infty} \ge \frac{1}{\sqrt{a_{t_j}}}=1-\theta(\mathbf{x}(t_j)).
\end{equation*}
So (\ref{eq:appro-O-dis-exp3}) is satisfied by the choice of the $1-\theta(\mathbf{x}(t_j))$
\end{proof}

With (\ref{eq:exp3-theta}) and $\mathbf{v}(\mathbf{x}(t_j)), \mathbf{x}(t_j)\in C$, we have
\begin{eqnarray}\label{eq:exp3-u}
d(t_j)^2\left\|\mathbf{u}(\mathbf{x}(t_j))\right\|^2=d(t_j)^2\left\|\frac{\mathbf{v}(\mathbf{x}(t_j))-\mathbf{x}(t_j)}{1-\theta(\mathbf{x}(t_j))}\right\|^2=a_{t_{j}}\left\|\mathbf{v}(\mathbf{x}(t_j))-\mathbf{x}(t_j)\right\|^2 \le Da_{t_j}.
\end{eqnarray}

\item [Step 6.] Choose 
\[
t_j=\frac{j}{N}, j=0,\ldots, N.
\]

Noting that $G_j^{+}=0$ from (\ref{eq:con-2-dis-exp3}), and (\ref{eq:exp3-u}), the approximation ratio and the time complexity in (\ref{eq:con-2-dis}) are as follows::
\begin{eqnarray}
\nonumber F(\mathbf{x}(t_N))&\ge &\frac{b_{t_N}-b_{t_0}}{a_{t_N}}F(\mathbf{x}^*)-\frac{DL}{2a_{t_N}}\sum_{j=0}^{N-1} \frac{\left(b_{t_{j+1}}-b_{t_j}\right)^2a_{t_j}}{a_{t_{j+1}}}\\
\nonumber&\ge&\frac{1}{4}F(\mathbf{x}^*)-\frac{DL}{2a_{t_N}}\sum_{j=0}^{N-1}\frac{1}{N^2}\\
\label{eq:apprx-ratio-example3}&=&\frac{1}{4}F(\mathbf{x}^*)-O\left(\frac{DL}{N}\right),
\end{eqnarray}
 where the second inequality is from  $\frac{a_{t_j}}{a_{t_{j+1}}}\le 1$ since $a_{t}=(1+t)^2$ is an increasing function. 
Therefore, from (\ref{eq:apprx-ratio-example3})
\begin{eqnarray*}
F(\mathbf{x}(t_N))&\ge & \frac{1}{4} F(\mathbf{x}^*)-\varepsilon,
\end{eqnarray*}
after $O\left(\frac{DL}{\varepsilon}\right)$ iterations. 


Due to the assumption $\mathbf{0}\in C$ in Assumption~\ref{assum:sub-oracle} (1), this result is not in conflict with the hardness result in \cite{Vondrak2013} who shows that no constant approximation algorithm exists with a polynomial number of value oracle calls. If we relax Assumption~\ref{assum:sub-oracle} (1), and initialize the algorithm by an arbitrary feasible solution $\mathbf{x}(0)\in C$, then the approximation ratio becomes
\begin{eqnarray*}
F(\mathbf{x}(t_N))&\ge & \frac{1}{4}\left(1-\|\mathbf{x}(0)\|_{\infty}\right) F(\mathbf{x}^*)-\varepsilon.
\end{eqnarray*}
For example, if we choose $\mathbf{x}(0)=\arg\min_{\mathbf{x}\in C}\|\mathbf{x}\|_{\infty}$, then the approximation ratio becomes
\begin{eqnarray*}
F(\mathbf{x}(t_N))&\ge & \frac{1}{4}\left(1-\min_{\mathbf{x}\in C}\|\mathbf{x}\|_{\infty}\right) F(\mathbf{x}^*)-\varepsilon.
\end{eqnarray*}

As a corollary, if $F$ is the multilinear extension of a submodular set function, then plugging the $L$ in (\ref{eq:mle-L-smooth}) into (\ref{eq:apprx-ratio-example3}) and noting that $D=O(n)$, we obtain
\[
F(x(t_N))\ge \left(\frac{1}{e}-O\left(\frac{n^3}{N}\right)\right)F(x^*).
\]
Consequently,
\[
F(x(t_N))\ge \left(\frac{1}{4}-\varepsilon\right)F(x^*)
\]
 after $O\left(\frac{DL}{\varepsilon}\right)$ iterations.
\end{description}

\section{Discussions}\label{sec:conclusion}

Note that enforcing $a_t=b_t$ brings us back to the convergence analysis in nonlinear programming, although the focus there has been on using Lyapunov function mainly as a proof technique rather than as a compass in algorithm design. 

The proposed framework in this work has the potential to unify many existing results and hence provides new perspectives on how novel algorithms in the literature might have been designed in the first place, as demonstrated in this work by examples from the submodular maximization field. The most important feature of our framework is its transparency which can bring out the major difficulties in the design of algorithms. Therefore, this framework has huge potentials to suggest new insights and ideas to either improve on existing algorithms or attack new problems.

For a given optimization problem, two usually interdependent key factors command the efficiency of the framework: the the parametric form of the bounds on the optimal value and the parametric form of the Lyapunov function. 

One factor is to find useful bounds on the optimal value (upper bounds for maximization problem and lower bounds for minimization problems). These bounds demand the most innovative attention in the framework because they are usually problem-specific. However, general duality theory (such as Fenchel and Lagrangian duality) for non-convex optimization usually offers a systematic scheme on where to look for the bounds (e.g., \cite{toland1978duality,singer2007duality}).

Another factor is the parametric form of the Lyapunov function. The choice of the parametric form is also critical for the design and the consequent analysis of approximation algorithm. While (\ref{eq:Lyapunov-function-continuous-time}) can always serve as the first choice, as explained earlier, other possibilities exist. One open question is whether there exist systematic ways to specify the form of Lyapunov function, or whether we can adopt a non-parametric approach to leave the form of the Lyapunov function unspecified until the end of the process, just like how we treated $a_t$ and $b_t$. Moreover the parametric forms of the bound and the Lyapunov function are not independent. However, we usually take a ``conditional expectation" approach where we fix one first and derive the other conditionally. In this paper we first fix the parametric form of the Lyapunov function and then construct the bound conditionally.

For composite function $F=f+g$ where $f$ is DR-submodular and $g$ is concave, let $\mathbf{x}^*=\arg_{\mathbf{x}\in C} F(\mathbf{x})$ be the optimal solution. The Lyapunov functions take the form 
\[
E(\mathbf{x}(t))=a_tf(\mathbf{x}(t))-b_tf(\mathbf{x}^*)+c_tg(\mathbf{x}(t))-d_tg(\mathbf{x}^*)).
\]
For the DR-submodular function $f$, we can use the earlier bounds on $f(\mathbf{x}^*)$, and for the concave function $g$, there are two possibilities. 
\begin{enumerate}[(i)]
\item If we want to design a bi-criteria algorithm for the problem, namely, finding a solution $\mathbf{x}$ satisfying
\[
f(\mathbf{x})+g(\mathbf{x})\ge \alpha f(\mathbf{x}^*)+\beta g(\mathbf{x}^*),
\]
then for the concave function $g$, Jensen inequality provides a natural bound on $g(\mathbf{x}^*)$:
\[
g(\mathbf{x}^*)\le g(\mathbf{x}(t))+\langle \nabla g(\mathbf{x}(t)), \mathbf{x}^*-\mathbf{x}(t)\rangle
\]
Then almost the same six-step process recovers many existing algorithms and even proposes new algorithms for this type of composite objective functions (e.g.~\cite{mitra2021submodular+}).
\item If we want to study the mirror-decent (projected-gradient as a special case) type of algorithms, then we do not need to use any bound on $g$ as it is usually the Bregman divergence of some simple function (e.g., the Euclidean distance used in the projected-gradient method). Then almost the same six-step process recovers many existing algorithms and even proposes new algorithms for this type of composite objective functions (e.g.~\cite{hassani2017gradient}).
\end{enumerate}

The running examples we used to illustrate the framework are purposely selected because their impacts on the DR-submodular maximization field. In particular, many variants and extensions of these basic problems have been investigated. We believe the transparency of our framework can be very productive in unifying these results and also offer new perspectives to improve existing results. 

Any algorithm for an optimization problem lives in both the time and the space, leading to four possible modeling choices: continuous-time continuous-space, discrete-time continuous-space, continuous-time discrete-space, and discrete-time discrete-space. In this article, we focus on the first two cases. More future work are need to the last two cases as well.

We expect the ideas presented here to be applied to other areas of combination optimization, such as variational inequality problems, games, online problems, stochastic problems, among others. 


 \section*{Acknowledgements} Donglei Du's research is partially supported by the NSERC grant (No. 283106), and NSFC grants (Nos. 11771386 and  11728104). The author would express his appreciation to Qiaoming Han, Jiachen Ju, Yuefang Lian, Zhichen Liu, Dachuan Xu, Xianzhao Zhang, and Yang Zhou for their careful reading of the manuscript with insightful comments which greatly improve the presentation of the paper.


\bibliographystyle{apalike}

\bibliography{Lyapunov}

\end{document}